\newtheorem{theorem}{Theorem}[section]
\newtheorem{corollary}[theorem]{Corollary}
\newtheorem{lemma}[theorem]{Lemma}
\newtheorem{proposition}[theorem]{Proposition}
\newtheorem{remark}[theorem]{Remark}
\newtheorem{example}[theorem]{Example}
\newtheorem{definition}[theorem]{Definition}
\newcommand{\C}{{\ensuremath{\mathbb{C}}}}
\newcommand{\Cm}{{\ensuremath{\C^{p\times n}}}}
\newcommand{\Cn}{{\ensuremath{\C^{n\times p}}}}
\newcommand{\Cnn}{{\ensuremath{\C^{n\times n}}}}
\newcommand{\Cmm}{{\ensuremath{\C^{p\times p}}}}
\newcommand{\Ra}{{\ensuremath{\cal R}}}
\newcommand{\Nu}{{\ensuremath{\cal N}}}
\newcommand{\ra}{{\ensuremath{\text{\rm rk}}}}
\newcommand{\ind}{{\ensuremath{\text{\rm Ind}}}}
\newcommand{\odagger}{\mathrel{\text{\textcircled{$\dagger$}}}}
\newcommand{\core}{\mathrel{\text{\textcircled{$\#$}}}}
\newcommand{\weak}{\mathrel{\text{\textcircled{w}}}}
\begin{document}

\title{The $W$-weighted $m$-weak core inverse}

\author{D.E. Ferreyra\thanks{Universidad Nacional de R\'io Cuarto, CONICET, FCEFQyN, RN 36 KM 601, 5800 R\'io Cuarto, C\'ordoba, Argentina. E-mail: \texttt{deferreyra@exa.unrc.edu.ar}},\,
D. Mosi\' c\thanks{University of Ni\v s, Faculty of Sciences and Mathematics, Vi\v segradska 33, 18000 Ni\v s, Serbia. E-mail: \texttt{dijana@pmf.ni.ac.rs}}
}

\date{}
\maketitle

\begin{abstract}
Recently, Malik and Ferreyra introduced the $m$-weak core inverse for complex square matrices which generalizes the core-EP inverse, the
WC inverse, and therefore the core inverse. The main aim of this paper is to extend the concept of $m$-weak core inverse for complex rectangular matrices. This extension is called the $W$-weighted $m$-weak core inverse. We analyse its existence and uniqueness as solution of a system of matrix equations. We present various properties, representations and  characterizations  of the $W$-weighted $m$-weak core inverse, as well as its applications in solving certain matrix systems. A canonical form  of the $W$-weighted $m$-weak core inverse  is  also provided  by using a simultaneous unitary block upper triangularization of a pair of rectangular matrices.
\end{abstract}

AMS Classification: 15A09, 15A10, 15A24, 15A30

\textrm{Keywords}: Weighted generalized inverses; $W$-weighted $m$-weak group inverse; $W$-weighted $m$-weak core inverse; $W$-weighted core-EP inverse; $W$-weighted Drazin inverse.

%%===============================
\section{Introduction and preliminaries}

Consider $\Cm$  as the set of all $p\times n$ complex matrices. Let $A\in \Cm$. The conjugate transpose, rank, null space and column space of $A$ are represented by $A^*$, $\ra(A)$, $\Nu(A)$ and $\Ra(A)$, respectively.  The index of $A\in \Cnn$, denoted by $\ind(A)$, is the smallest nonnegative integer $k$ such that $\ra(A^k) = \ra(A^{k+1})$. Moreover, $A^0=I_n$ will refer to the $n \times n$ identity matrix. Standard notations $P_S$ and $P_{S,T}$ denote, respectively, the orthogonal projector onto a subspace $S$ and a projector onto $S$ along $T$ when $\C^n$ is equal to the direct sum of subspaces $S$ and $T$.

The Moore-Penrose inverse of a matrix $A\in \Cm$ is the unique matrix $X=A^\dag \in \Cn$ that satisfies the Penrose equations
\[AXA=A, \quad XAX=X,
\quad (AX)^*=AX, \quad(XA)^*=XA.\]
Denote by $P_A:=AA^{\dag}$ and $Q_A:=A^{\dag}A$ the orthogonal projectors onto $\Ra(A)$ and $\Ra(A^*)$, respectively.

The Drazin inverse of a matrix $A\in \Cnn$ is the unique matrix $X=A^d \in \Cnn$ that satisfies
\[X A^{k+1} =A^k, \quad XAX=X, \quad AX=XA, ~\text{where} ~k=\ind(A).\]
When $\ind(A)=1$,  the Drazin inverse is called the group inverse of $A$ and is denoted by $A^\#$.

The core-EP inverse of a matrix $A\in \Cnn$ of index $k$ is the unique matrix  $A^{\odagger}=X\in \Cnn$ satisfying the conditions
$XAX=X$ and $\Ra(X)=\Ra(X^*)=\Ra(A^k)$ \cite{PrMo}.  When $\ind(A)=1$,  $A^{\odagger}:=A^{\core}=A^{\#}AA^\dag$ becomes the core inverse of $A$ \cite{BaTr1}.

By using the core-EP inverse, Zhou et al. \cite{ZhChZh} defined the $m$-weak group inverse of an element in a ring with involution. Let us recall its definition for the matrix case. If $m\in \mathbb{N}$, the $m$-weak group inverse of $A\in \Cnn$ is the unique matrix $A^{\weak_m}=X\in \Cnn$ that satisfies
\begin{equation*}\label{def m-weak group}
AX^2=X ,\quad AX=(A^{\odagger})^m A^m.
\end{equation*}
When $m=1$, the $m$-weak group inverse is reduced to the WG inverse studied by Wang and Chen in \cite{WaCh}.
Based in this new class of generalized inverses, Ferreyra and Malik \cite{FeMa5} proposed the $m$-weak core inverse. Exactly, if $m\in \mathbb{N}$, the $m$-weak core inverse of $A\in \Cnn$ with $k=\ind(A)$, is the unique matrix $A^{\core_m}=X\in \Cnn$ that satisfies
\begin{equation}\label{def m-weak core}
AX=(A^{\odagger})^m A^m P_{A^m},\quad \Ra(X)\subseteq \Ra(A^k).
\end{equation}
It was proved the the unique solution of \eqref{def m-weak core} is the matrix $A^{\core_m}=A^{\weak_m}P_{A^m}$. When $m=1$, the $m$-weak core inverse reduces to the WC inverse studied in \cite{FeLePrTh}, which is an alternative extension of the core inverse.  Moreover, if $m\ge k=\ind(A)$, it is reduced to the core-EP inverse of $A$.

In 1980, Cline and Greville \cite{ClGr} extended the Drazin inverse to rectangular matrices and it was called the $W$-weighted Drazin inverse.
Let $W\in \Cn$ be a fixed non null matrix. The $W$-weighted Drazin inverse of $A\in \Cm$, denoted by  $A^{d,W}$, is the unique matrix
$X\in \Cm$ satisfying the three equations
\[XWAWX=X, \quad AWX=XWA, \quad   XW(AW)^{k+1}=(AW)^k, ~\text{with}~ k=\max\{\ind(AW),\ind(WA)\}.\]
When $p=n$ and $W=I_n$, we recover the Drazin inverse, that is,  $A^{d,I_n}=A^d$.

The $W$-weighted Drazin inverse satisfies some interesting  properties in term of the Drazin inverse of the square matrices $AW$ and $WA$:
\begin{equation}\label{properties w drazin}
A^{d,W}=A[(WA)^d]^2=[(AW)^d]^2A, \quad A^{d,W}W=(AW)^d,  \quad WA^{d,W}=(WA)^d.
\end{equation}

Similarly, the core-EP inverse was extended to rectangular matrices in \cite{FeLeTh1}. The authors defined the $W$-weighted core-EP inverse of $A$, denoted by $A^{\odagger,W}$, as the unique solution $X=(WAWP_{(AW)^k})^\dag$ of the system
\begin{equation}\label{w cep system}
WAWX=P_{(WA)^k}, \quad \Ra(X)\subseteq \Ra((AW)^k),  ~\text{with}~ k=\max\{\ind(AW),\ind(WA)\}.
\end{equation}
Clearly, if $m=n$ and $W=I_n$, we recover the core-EP inverse, that is,  $A^{\odagger,I_n}=A^{\odagger}$.

The $W$-weighted core-EP inverse can be expressed in term of the $W$-weighted Drazin inverse  \cite{FeLeTh1, GaChPa}
\begin{equation}\label{properties weighted CEP}
A^{\odagger,W}=A[(WA)^{\odagger}]^2=A^{d,W}P_{(WA)^k},
\end{equation}

Recently, by using the $W$-weighted core-EP inverse,  the $W$-weighted $m$-weak group inverse of $A$ was defined in \cite{MoStKa} as the unique matrix $A^{\weak_m,W}=X\in \Cm$ satisfying the following equations
\begin{equation}\label{system weighted m-weak group}
AWX=(A^{\odagger,W}W)^m(AW)^{m-1}A, \quad  AWXWX=X,  ~\text{with}~ k=\max\{\ind(AW),\ind(WA)\}.
\end{equation}
It was proved that the unique solution to \eqref{system weighted m-weak group}  is given by
\begin{equation}\label{def weighted m-weak group}
A^{\weak_m,W}=(A^{\odagger,W}W)^{m+1}(AW)^{m-1}A.
\end{equation}
Note that $A^{\weak_1,W}=A^{\weak,W}$ coincides with the $W$-weighted WG inverse established in \cite{FeOrTh1} while $A^{\weak_m,W}=A^{d,W}$ if $m\ge k$.
When $p=n$ and $W=I_n$, we recover the $m$-weak group inverse, that is,  $A^{\weak_m,I_n}=A^{\weak_m}$.

Other generalized inverses of complex square matrices such as the BT, DMP and CMP inverses were also extended to the rectangular case \cite{FeThTo, Meng, Mo1, MoKo}.

In this paper, we extend the definition given in \eqref{def m-weak core} for the case of a matrix $A\in \Cm$ by  considering a conformable weight $0\neq W\in \Cn$, which gives rise to a new type of generalized inverse called $W$-weighted $m$-weak core inverse of $A$. The main contributions of this paper are structured as follows. In Section 2, we define the $W$-weighted $m$-weak core inverse  and study its main representations and properties.
In Section 3, we provide several algebraic characterizations of the  $W$-weighted $m$-weak core inverse by mean of different systems of matrix equations.
In Section 4, we obtain a canonical form of the $W$-weighted $m$-weak core inverse  by using a simultaneous decomposition of the matrices $A$ and $W$ known as the weighted core-EP decomposition. In consequence, some more properties that this new generalized inverse possesses are investigated.
Finally, in Section 5, we show that the $W$-weighted $m$-weak core inverse can be used to solve certain linear matrix equations and express their general solution.

\section{Existence, Uniqueness and Representations}

In this section, we define and investigate the $W$-weighted $m$-weak core  inverse for rectangular matrices $A\in \Cm$ by considering a fixed non null matrix $W\in \Cn$.

\begin{definition}
Let $A \in \Cm$, $0\neq W\in \Cn$, $k=\max\{\ind(AW), \ind(WA)\}$, and $m\in \mathbb{N}$. The matrix $A^{\core_m,W}\in \Cm$ satisfying
\begin{equation}\label{def weighted m-weak core}
A^{{\core}_m,W}=A^{\weak_m,W} P_{(WA)^m},
\end{equation}
is called  the $W$-weighted $m$-weak core inverse of $A$.
\end{definition}

In \cite{MoStKa} it was proved that $W$-weighted $m$-weak group inverse of a rectangular matrix always exists and is unique. So, it is clear from \eqref{def weighted m-weak core} that the $W$-weighted $m$-weak core inverse of a rectangular matrix always exists and is unique.

\begin{remark}\label{remark 1}
\begin{enumerate}[(i)]
\item   If $m=1$, then  $A^{\core_1,W}=A^{\weak,W} P_{WA}$ which is an
extension of the WC inverse to the rectangular case;
\item  If  $m\ge k$ , then the $W$-weighted $m$-weak core inverse coincides with the $W$-weighted core-EP inverse, that is, $A^{\core_m,W}=A^{\odagger,W}$. In fact, when $m\ge k$ we have $A^{\weak_m,W}=A^{d,W}$ and $P_{(WA)^m}=P_{(WA)^k}$. Thus, from the second equality in \eqref{properties weighted CEP}, it follows the affirmation.
\end{enumerate}
\end{remark}

Next, we give different representations for the $W$-weighted $m$-weak core inverse.

\begin{proposition} \label{proposition 1}
Let $A \in \Cm$, $0\neq W\in \Cn$, $k=\max\{\ind(AW), \ind(WA)\}$, and $m\in \mathbb{N}$. Then
\begin{enumerate}[{\rm(a)}]
\item  $A^{\core_m,W}=(A^{\odagger,W}W)^m (AW)^{m-1}AP_{(WA)^m}$.
\item  $A^{\core_m,W}=A[(WA)^{\odagger}]^{m+2}(WA)^mP_{(WA)^m}$.
\item  $A^{\core_m,W}=A(WA)^{\odagger}(WA)^{\weak_m}P_{(WA)^m}$.
\item  $A^{\core_m,W}=A[(WA)^d]^{m+2}P_{(WA)^k}(WA)^mP_{(WA)^m}$.
\item  $A^{\core_m,W}=A(WA)^k[(WA)^{k+m+2}]^\dag (WA)^mP_{(WA)^m}$.
\item $A^{\core_m,W}=A(WA)^k[(WA)^{k+m+2}]^\dag (WA)^{2m}[(WA)^m]^\dag$.
\item  $A^{\core_m,W}=A[(WA)^d]^{m+2}P_{(WA)^k}[((WA)^m)^\diamond]^\dag$.
\item  $A^{\core_m,W}=(AW)^kA[(WA)^{k+m+2}]^\dag (WA)^mP_{(WA)^m}$.
\item  $A^{\core_m,W}=YWAWA^{\core_m,W}$, where $YWAWY=Y$ and $\Ra(Y)=\Ra((AW)^k)$.
\end{enumerate}
\end{proposition}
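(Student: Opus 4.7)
The plan is to reduce all nine representations to a single canonical form of $A^{\core_m,W}$ and then recognise each right-hand side as a rearrangement of it. Everything flows from the definition $A^{\core_m,W}=A^{\weak_m,W}P_{(WA)^m}$, the closed form $A^{\weak_m,W}=(A^{\odagger,W}W)^{m+1}(AW)^{m-1}A$ from \eqref{def weighted m-weak group}, and the identities $A^{\odagger,W}=A[(WA)^{\odagger}]^{2}=A^{d,W}P_{(WA)^{k}}$ and $(WA)^{\odagger}=(WA)^{d}P_{(WA)^{k}}$ from \eqref{properties weighted CEP}. The asymmetric core-EP identity $(WA)^{\odagger}(WA)(WA)^{\odagger}=(WA)^{\odagger}$ will do most of the simplification work.

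The pivotal step is a short induction on $j$ establishing the auxiliary identity
\[
(A^{\odagger,W}W)^{j}=A[(WA)^{\odagger}]^{j+1}W\qquad(j\ge 1),
\]
whose inductive step amounts to applying $(WA)^{\odagger}(WA)(WA)^{\odagger}=(WA)^{\odagger}$ to a single middle pair of factors. Combined with the elementary rectangular-to-square identity $W(AW)^{m-1}A=(WA)^{m}$, this collapses $A^{\weak_m,W}$ to $A[(WA)^{\odagger}]^{m+2}(WA)^{m}$, and multiplying on the right by $P_{(WA)^{m}}$ yields (b) directly. Part (a) follows from the same chain together with the first equation $AWA^{\weak_m,W}=(A^{\odagger,W}W)^{m}(AW)^{m-1}A$ of \eqref{system weighted m-weak group}. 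Part (c) is then a cosmetic rewrite after recognising $[(WA)^{\odagger}]^{m+1}(WA)^{m}=(WA)^{\weak_m}$, the $W=I$ specialisation of \eqref{def weighted m-weak group}.

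The translation to Drazin and Moore-Penrose form is routine. Part (d) follows from (b) upon substituting $(WA)^{\odagger}=(WA)^{d}P_{(WA)^{k}}$ and using $\Ra((WA)^{d})=\Ra((WA)^{k})$ to verify $((WA)^{d}P_{(WA)^{k}})^{j}=((WA)^{d})^{j}P_{(WA)^{k}}$, so that the $m+2$ projectors collapse to a single trailing $P_{(WA)^{k}}$. Parts (e), (f), (h) rely on the Cline-type identity $(WA)^{d}=(WA)^{k}[(WA)^{2k+1}]^{\dag}(WA)^{k}$: plugging it into (d) and absorbing $(WA)^{k}[(WA)^{k}]^{\dag}=P_{(WA)^{k}}$ telescopes the iterated powers into a single $[(WA)^{k+m+2}]^{\dag}$ flanked by $(WA)^{k}$ and $(WA)^{m}$, yielding (e); (f) follows from (e) via $(WA)^{m}P_{(WA)^{m}}=(WA)^{2m}[(WA)^{m}]^{\dag}$, and (h) from (e) via the mixed-product shift $A(WA)^{k}=(AW)^{k}A$. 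Part (g) is (d) after identifying $(WA)^{m}P_{(WA)^{m}}$ with $[((WA)^{m})^{\diamond}]^{\dag}$ via the indicated dual construction. For (i), one takes $Y:=A^{\odagger,W}$, verifies $YWAWY=Y$ from $WAWA^{\odagger,W}=P_{(WA)^{k}}$ (a consequence of \eqref{w cep system}) together with $A^{\odagger,W}P_{(WA)^{k}}=A^{\odagger,W}$, checks $\Ra(Y)=\Ra((AW)^{k})$ from the standard range identity for the $W$-weighted core-EP inverse, and derives $A^{\core_m,W}=YWAWA^{\core_m,W}$ from $\Ra(A^{\core_m,W})\subseteq\Ra((AW)^{k})$, which is read off from (b) via $\Ra((WA)^{\odagger})=\Ra((WA)^{k})$ and $A(WA)^{k}=(AW)^{k}A$.

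The main obstacle is disciplined bookkeeping: the two projectors $P_{(WA)^{k}}$ and $P_{(WA)^{m}}$ coincide only when $m\ge k$ (by Remark~\ref{remark 1}), and one must resist false shortcuts such as $(WA)^{\odagger}(WA)=I_{n}$ or $AWA^{\odagger,W}=I_{p}$, relying throughout on the asymmetric identity $(WA)^{\odagger}(WA)(WA)^{\odagger}=(WA)^{\odagger}$ as the sole admissible simplification.
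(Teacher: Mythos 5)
Your overall strategy is sound and, for most items, genuinely more self-contained than the paper's proof: where the paper simply cites \cite[Lemma 2.1]{MoStKa} for items (b)--(e), you derive the master identity $(A^{\odagger,W}W)^{j}=A[(WA)^{\odagger}]^{j+1}W$ by induction from $A^{\odagger,W}=A[(WA)^{\odagger}]^{2}$ and the outer-inverse property $(WA)^{\odagger}(WA)(WA)^{\odagger}=(WA)^{\odagger}$; together with $W(AW)^{m-1}A=(WA)^{m}$ this correctly yields $A^{\weak_m,W}=A[(WA)^{\odagger}]^{m+2}(WA)^{m}$ and hence (b). Your reductions of (c), (d), (f), (g), (h) to (b), (d), (e) are all valid and match, or legitimately replace, the paper's one-line justifications.

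Three steps, however, do not close as written. \emph{Part (a):} the route through the first equation of \eqref{system weighted m-weak group} gives $(A^{\odagger,W}W)^{m}(AW)^{m-1}AP_{(WA)^m}=AWA^{\weak_m,W}P_{(WA)^m}=AWA^{\core_m,W}$, which is \emph{not} $A^{\core_m,W}$ in general: by the canonical form of Theorem~\ref{canonical form weighted m-weak core} these two matrices have $(1,1)$-blocks $W_1^{-1}$ and $(W_1A_1W_1)^{-1}$ respectively. The direct route intended by the paper is $A^{\core_m,W}=A^{\weak_m,W}P_{(WA)^m}=(A^{\odagger,W}W)^{m+1}(AW)^{m-1}AP_{(WA)^m}$, i.e.\ with exponent $m+1$; the exponent $m$ printed in (a) is inconsistent with this, and your argument neither proves the printed identity nor detects the discrepancy. \emph{Part (e):} the claimed telescoping after substituting $(WA)^{d}=(WA)^{k}[(WA)^{2k+1}]^{\dag}(WA)^{k}$ into $[(WA)^{d}]^{m+2}$ does not occur mechanically --- the interior products have the form $[(WA)^{2k+1}]^{\dag}(WA)^{2k}[(WA)^{2k+1}]^{\dag}$, and the absorption $(WA)^{k}[(WA)^{k}]^{\dag}=P_{(WA)^{k}}$ you invoke never arises there. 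What is actually needed is the identity $[(WA)^{\odagger}]^{j}=(WA)^{k}[(WA)^{k+j}]^{\dag}$ (equivalently $[(WA)^{d}]^{m+2}P_{(WA)^{k}}=(WA)^{k}[(WA)^{k+m+2}]^{\dag}$), which is exactly the content the paper imports from \cite{MoStKa} and requires its own proof, e.g.\ via the core-EP decomposition of $WA$ and the relation $\widetilde{S}_{k+j}=T^{j}\widetilde{S}_{k}$. \emph{Part (i):} the assertion concerns \emph{every} outer inverse $Y$ of $WAW$ with $\Ra(Y)=\Ra((AW)^{k})$, not only $Y=A^{\odagger,W}$. Your concluding step --- $\Ra(A^{\core_m,W})\subseteq\Ra((AW)^{k})=\Ra(Y)=\Ra(YWAW)=\Nu(I_p-YWAW)$, hence $(I_p-YWAW)A^{\core_m,W}=0$ --- is precisely the paper's argument and works verbatim for arbitrary such $Y$, so it should be stated at that level of generality rather than for one particular choice.
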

\begin{proof}
(a) It directly follows from \eqref{def weighted m-weak group} and \eqref{def weighted m-weak core}.\\
Items (b)-(e) follow from \cite[Lemma 2.1]{MoStKa}. \\
(f) The affirmation follows from (e). \\
(g) By (d) and definition of BT inverse of $(WA)^m$ \cite{BaTr2}.  \\
(h) Follows of the fact that $(AW)^kA=A(WA)^k$. \\
(i) Since $Y$ is an outer inverse of $WAW$, it is clear that $\Ra(Y)=\Ra(YWAW)$. Thus, from $\Ra(Y)=\Ra((AW)^k)$ and (h) we have
\[\Ra(A^{\core_m,W})\subseteq \Ra((AW)^k)=\Ra(Y)=\Ra(YWAW)=\Nu(I_n-YWAW),\]
whence $(I_n-YWAW)A^{\core_m,W}=0$. So, $A^{\core_m}=YWAWA^{\core_m}$.
\end{proof}

\begin{remark} Observe that the matrix $Y$ in Proposition \ref{proposition 1} (i) can be one of generalized inverses: $A^{d,W}$, $A^{\odagger,W}$ or $A^{\weak_m,W}$.
\end{remark}

Some more properties of the $W$-weighted $m$-weak core inverse are established below. Before, we provide some auxiliary lemmas.

\begin{lemma}\label{lemma projector 1} Let $A\in \Cnn$,  $\ind(A)=k$, and $m\in \mathbb{N}$. Then for each integer $\ell\ge k$ we have $P_{A^m}A^{\ell}=A^{\ell}$.
\end{lemma}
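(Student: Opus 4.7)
The plan is to reduce the identity $P_{A^m}A^{\ell}=A^{\ell}$ to a range inclusion. Since $P_{A^m}$ is by definition the orthogonal projector onto $\Ra(A^m)$, it acts as the identity on any matrix whose column space is contained in $\Ra(A^m)$. Thus it suffices to establish the inclusion $\Ra(A^{\ell})\subseteq \Ra(A^m)$ for every $\ell\ge k$.

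To verify this inclusion I would split into two cases depending on the relative sizes of $m$ and $k$. If $m\le k$, then the hypothesis $\ell\ge k$ forces $\ell\ge m$, so the factorization $A^{\ell}=A^{m}A^{\ell-m}$ immediately yields $\Ra(A^{\ell})\subseteq \Ra(A^m)$. If instead $m>k$, then by the defining property of the index $k=\ind(A)$ we have $\Ra(A^{i})=\Ra(A^{k})$ for all $i\ge k$; in particular $\Ra(A^{m})=\Ra(A^{k})=\Ra(A^{\ell})$, and the inclusion holds (indeed with equality). Combining both cases and applying $P_{A^m}$ finishes the argument.

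There is no real obstacle here; the only point requiring a little care is to remember that the index hypothesis gives stabilization of the ranges $\Ra(A^i)$ only from the $k$-th power onward, so one must treat $m<k$ and $m\ge k$ separately rather than simply invoking stabilization. Everything else is a one-line verification once the reduction to range inclusion is made.
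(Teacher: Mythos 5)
Your proof is correct and follows essentially the same route as the paper's: reduce the identity to the range inclusion $\Ra(A^{\ell})\subseteq\Ra(A^m)$, handle the case $\ell\ge m$ by direct factorization, and handle the remaining case via stabilization of the ranges $\Ra(A^i)$ for $i\ge k=\ind(A)$. The only cosmetic difference is that you split on $m$ versus $k$ while the paper splits on $\ell$ versus $m$; the substance is identical.
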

\begin{proof}
Note that $P_{A^m}A^{\ell}=A^{\ell}$ holds if and only if $\Ra(A^{\ell})\subseteq \Nu(I_n-P_{A^m})=\Ra(P_{A^m})=\Ra(A^m)$, which is always true. In fact, if $\ell \ge m$ the affirmation is trivial. When $\ell < m$, from  $\ell\ge k=\ind(A)$ we obtain $\Ra(A^{\ell})\subseteq \Ra(A^m)$ by definition of index of a matrix.
\end{proof}

\begin{lemma} \label{lemma null space} Let $A$, $B$, and $C$ be complex rectangular matrices of adequate size such that $\Nu(A)=\Nu(B)$. Then $\Nu(AC)=\Nu(BC)$.
\end{lemma}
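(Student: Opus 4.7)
The statement is a very elementary set-theoretic equality between two null spaces, so the plan is simply a direct double inclusion proof carried out pointwise on column vectors. No linear algebra machinery beyond the definition of null space is needed.

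The plan is to show both inclusions $\Nu(AC) \subseteq \Nu(BC)$ and $\Nu(BC) \subseteq \Nu(AC)$ by the same template. For the forward inclusion I would take an arbitrary $x$ with $(AC)x=0$, rewrite this as $A(Cx)=0$, conclude $Cx \in \Nu(A)$, then invoke the hypothesis $\Nu(A)=\Nu(B)$ to get $Cx \in \Nu(B)$, and finally read this as $(BC)x=0$. The reverse inclusion is obtained by swapping the roles of $A$ and $B$ in exactly the same argument, since the hypothesis $\Nu(A)=\Nu(B)$ is symmetric.

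There is no real obstacle: the only thing one must be careful about is that the matrices have compatible dimensions so that the products $AC$ and $BC$ make sense, which is guaranteed by the wording ``of adequate size'' together with $\Nu(A)=\Nu(B)$ (this last equality forces $A$ and $B$ to have the same number of columns, so they can be multiplied by the same $C$ on the right). Because the proof is this short, I would write it in two sentences in the paper itself rather than breaking it into displayed steps.
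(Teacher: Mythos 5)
Your proof is correct. It is, however, a slightly different route from the one the paper takes: the paper passes to adjoints, using that $\Nu(A)=\Nu(B)$ is equivalent to $\Ra(A^*)=\Ra(B^*)$ and then computing $\Ra((AC)^*)=C^*\Ra(A^*)=C^*\Ra(B^*)=\Ra((BC)^*)$ before translating back to null spaces, whereas you argue directly and pointwise: $ACx=0$ iff $Cx\in\Nu(A)=\Nu(B)$ iff $BCx=0$. Your version is more elementary (no appeal to the orthogonality relation between $\Nu(M)$ and $\Ra(M^*)$, hence it would also work over an arbitrary field without an inner product), and it makes the symmetry of the hypothesis do the work of the second inclusion; the paper's version is a one-line chain of range identities that fits the range/null-space bookkeeping style used throughout the rest of the article. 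Either proof is acceptable here.
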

\begin{proof}
As $\Nu(A)=\Nu(B)$ is equivalent to $\Ra(A^*)=\Ra(B^*)$, we have $\Ra((AC)^*)=C^*\Ra(A^*)=C^*\Ra(B^*)=\Ra((BC)^*)$, whence $\Nu(AC)=\Nu(BC)$.
\end{proof}

As the $W$-weighted $m$-core inverse is defined in term of the $W$-weighted $m$-weak group inverse, in the following lemma we obtain some more properties of this last generalized inverse.

\begin{lemma}\label{lemma more properties of weighted WG} Let $A\in \Cm$, $0\neq W\in \Cn$ and $m\in \mathbb{N}$. Then the following statements hold:
\begin{enumerate}[{\rm (a)}]
\item $A^{\weak_m,W}WAWA^{\weak_m,W}=A^{\weak_m,W}$.
\item $AWA^{\weak_m,W}WA^{\weak_m,W}=A^{\weak_m,W}$.
\item $WA^{\weak_m,W}=(WA)^{\weak_m}$.
\item $A^{\weak_m,W}=A[(WA)^{\weak_m}]^2$.
\end{enumerate}
\end{lemma}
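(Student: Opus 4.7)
The plan is to prove the four assertions in the order (b), (c), (d), (a), each building on the previous. Statement (b) is essentially free: it is exactly the second equation of the defining system \eqref{system weighted m-weak group} with $X=A^{\weak_m,W}$, so nothing further is required.

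The key auxiliary fact for (c) and (d) is the bridge identity $WA^{\odagger,W}=(WA)^\odagger$. I would derive this from $A^{\odagger,W}=A[(WA)^\odagger]^2$ in \eqref{properties weighted CEP}: multiplying by $W$ on the left gives $WA^{\odagger,W}=(WA)(WA)^\odagger(WA)^\odagger$, and then $(WA)(WA)^\odagger=P_{(WA)^k}$ together with $\Ra((WA)^\odagger)\subseteq\Ra((WA)^k)$ collapses the right-hand side to $(WA)^\odagger$. Armed with this, I attack (c) by multiplying the explicit representation $A^{\weak_m,W}=(A^{\odagger,W}W)^{m+1}(AW)^{m-1}A$ from \eqref{def weighted m-weak group} on the left by $W$, shifting $W$'s through the product to obtain
\[
WA^{\weak_m,W}=(WA^{\odagger,W})^{m+1}(WA)^m=((WA)^\odagger)^{m+1}(WA)^m,
\]
and recognising the right-hand side as the $W=I_n$ specialisation of \eqref{def weighted m-weak group} applied to the square matrix $WA$, namely $(WA)^{\weak_m}$. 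Statement (d) is then immediate: using (b) followed by (c),
\[
A^{\weak_m,W}=AWA^{\weak_m,W}\cdot WA^{\weak_m,W}=A\cdot(WA^{\weak_m,W})\cdot(WA^{\weak_m,W})=A[(WA)^{\weak_m}]^2.
\]

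The real work is in (a). Combining (d) on the leftmost factor and (c) on the rightmost factor rewrites the left-hand side as
\[
A^{\weak_m,W}WAWA^{\weak_m,W}=A(WA)^{\weak_m}\bigl[(WA)^{\weak_m}(WA)(WA)^{\weak_m}\bigr],
\]
so it suffices to prove the square-matrix identity $B^{\weak_m}BB^{\weak_m}=B^{\weak_m}$ for $B=WA$; once the bracket collapses to $(WA)^{\weak_m}$, a second application of (d) yields (a). This square-matrix identity is the main obstacle, and I expect to handle it as follows. Using $BB^{\weak_m}=(B^\odagger)^m B^m$ (from the definition of the $m$-weak group inverse) together with the closed form $B^{\weak_m}=(B^\odagger)^{m+1}B^m$, the product becomes
\[
B^{\weak_m}BB^{\weak_m}=(B^\odagger)^{m+1}B^m(B^\odagger)^m B^m.
\]
Two intermediate identities then finish the job: first $B^m(B^\odagger)^m=P_{B^k}$, which I would prove by expanding $B^\odagger=B^d P_{B^k}$, exploiting the commutation $BB^d=B^d B$ to obtain $B^m(B^d)^m=(BB^d)^m=BB^d$, and then using $BB^d\,P_{B^k}=P_{B^k}$; and second $(B^\odagger)^{m+1}P_{B^k}=(B^\odagger)^{m+1}$, obtained from the telescoped form $(B^\odagger)^{m+1}=(B^d)^{m+1}P_{B^k}$ together with idempotence of $P_{B^k}$. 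The delicate part throughout is managing the asymmetry between the orthogonal projector $P_{B^k}$ and the oblique spectral projector $BB^d$ onto $\Ra(B^k)$; carefully tracking this asymmetry is what makes the square-matrix reduction work and, in turn, what drives the proof of (a).
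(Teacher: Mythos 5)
Your proposal is correct, but it is substantially more self-contained than the paper's own proof, which disposes of the two hard parts by citation: part (a) is quoted directly from \cite[Lemma 2.2]{MoStKa}, and part (c) is obtained from the identity $WA^{\weak_m,W}=P_{(WA)^k}(WA)^{\weak_m}$ of \cite[Remark 2.1]{MoStKa} together with $\Ra((WA)^{\weak_m})=\Ra((WA)^k)$; only (b) (from the defining system) and (d) (from (b) and (c)) are argued in the same way you do. Your route instead derives (c) by pushing $W$ through the closed form \eqref{def weighted m-weak group} via the bridge identity $WA^{\odagger,W}=(WA)^\odagger$, and then obtains (a) by reducing it, through (c) and (d), to the square-matrix outer-inverse identity $B^{\weak_m}BB^{\weak_m}=B^{\weak_m}$ for $B=WA$, which you verify with the projector computations $B^m(B^\odagger)^m=P_{B^k}$ and $(B^\odagger)^{m+1}P_{B^k}=(B^\odagger)^{m+1}$; all of these steps check out (the only index subtlety, that $k=\max\{\ind(AW),\ind(WA)\}$ may exceed $\ind(WA)$, is harmless because $P_{(WA)^k}=P_{(WA)^{\ind(WA)}}$). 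The paper's approach buys brevity at the cost of leaning on the external reference; yours makes the lemma provable from the formulas already displayed in the introduction, at the cost of the extra square-matrix computation, and also inverts the logical order (the paper takes (a) as the imported fact, whereas you make (a) the final consequence of (c) and (d)).
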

\begin{proof}
(a) By \cite[Lemma 2.2]{MoStKa}.\\
(b) It is due a \eqref{system weighted m-weak group}. \\
(c) From \cite[Remark 2.1]{MoStKa}, we have $WA^{\weak_m,W}=P_{(WA)^k}(WA)^{\weak_m}$, where $k=\max\{\ind(AW), \ind(WA)\}$. Thus, as $\Ra((WA)^{\weak_m})=\Ra((WA)^k)$ we get $P_{(WA)^k}(WA)^{\weak_m}=(WA)^{\weak_m}$. \\
(d) Directly follows from (b) and (c). \\
\end{proof}
\begin{theorem}\label{theorem more properties}
Let $A\in \Cm$, $0\neq W\in \Cn$, $k=\max\{\ind(AW), \ind(WA)\}$, and $m\in \mathbb{N}$. Then the following statements hold:
\begin{enumerate}[{\rm (a)}]
\item $A^{\core_m,W}WAWA^{\core_m,W}=A^{\core_m,W}$.
\item $A^{\core_m,W} W (AW)^{k+1}=(AW)^k$.
\item $\Ra(A^{\core_m, W})= \Ra((AW)^k)$ and $\Nu(A^{\core_m,W})=\Nu([(WA)^k]^*(WA)^m P_{(WA)^m})$.
\item $AWA^{\core_m,W}WA^{\core_m, W}=A^{\core_m,W}$.
\item $AWA^{\core_m,W}=A(WA)^{\core_m}$.
\item $A^{\core_m,W}WA=A^{\weak_m,W}P_{(WA)^m}WA$.
%\item $(AWA)^{\core_m,W}=A^{\core_m, W}WA^{\core_m, W}$.
\end{enumerate}
\end{theorem}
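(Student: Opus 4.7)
The plan is to dispatch (e), (f), (a), (d) via a uniform projector-absorption argument with Lemma \ref{lemma more properties of weighted WG}, while (b) and (c) demand separate arguments. The unifying observation is that $\Ra((WA)^k)\subseteq\Ra((WA)^m)$ for every $m\in\mathbb{N}$---either because $m\le k$ makes the powers nest downward, or because $m\ge k$ forces $\Ra((WA)^m)=\Ra((WA)^k)$---so $P_{(WA)^m}$ acts as the identity on $\Ra((WA)^k)$. Item (f) is immediate from \eqref{def weighted m-weak core}. For (e), write $AWA^{\core_m,W}=A(WA^{\weak_m,W})P_{(WA)^m}=A(WA)^{\weak_m}P_{(WA)^m}$ via Lemma \ref{lemma more properties of weighted WG}(c), and identify the product as $A(WA)^{\core_m}$ using the square-matrix instance of \eqref{def weighted m-weak core}. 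For (d), note that $\Ra(WA^{\weak_m,W})=\Ra((WA)^{\weak_m})=\Ra((WA)^k)$ is contained in $\Ra((WA)^m)$, so $P_{(WA)^m}WA^{\weak_m,W}=WA^{\weak_m,W}$, and Lemma \ref{lemma more properties of weighted WG}(b) finishes. The same absorption, now applied to $WAWA^{\weak_m,W}=(WA)(WA)^{\weak_m}=((WA)^{\odagger})^m(WA)^m$ whose range also sits in $\Ra((WA)^k)$, combined with Lemma \ref{lemma more properties of weighted WG}(a), closes (a).

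Item (b) is the genuine computation. After pulling $W$ across via $W(AW)^{k+1}=(WA)^{k+1}W$ and absorbing $P_{(WA)^m}$ against $(WA)^{k+1}$ by Lemma \ref{lemma projector 1}, one must show $A^{\weak_m,W}W(AW)^{k+1}=(AW)^k$. Using Lemma \ref{lemma more properties of weighted WG}(d) this becomes $A[(WA)^{\weak_m}]^2(WA)^{k+1}W$. The $m$-weak group identity $B^{\weak_m}B^{k+1}=B^k$, valid because $B^{\weak_m}=(B^{\odagger})^{m+1}B^m$ and $B^{\odagger}B^{\ell}=B^{\ell-1}$ for $\ell\ge k+1$, reduces the bracket to $A(WA)^{\weak_m}(WA)^kW$. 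A second application (together with $B^{\odagger}=B^dP_{B^k}$) yields $(WA)^{\weak_m}(WA)^k=(WA)^d(WA)^k$. Finally, the commutation identities $A(WA)^d=(AW)^dA$ and $A(WA)^k=(AW)^kA$, both consequences of \eqref{properties w drazin}, collapse $A(WA)^d(WA)^kW$ to $(AW)^d(AW)^{k+1}=(AW)^k$.

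Item (c) splits in two. The range equality combines Proposition \ref{proposition 1}(i), giving $\Ra(A^{\core_m,W})\subseteq\Ra((AW)^k)$, with item (b), giving $(AW)^k=A^{\core_m,W}W(AW)^{k+1}\in\Ra(A^{\core_m,W})$. The null space equality is the delicate step. Using the representation $A^{\core_m,W}=A[(WA)^{\odagger}]^{m+2}(WA)^mP_{(WA)^m}$ from Proposition \ref{proposition 1}(b), Lemma \ref{lemma null space} reduces the goal to $\Nu(A[(WA)^{\odagger}]^{m+2})=\Nu([(WA)^k]^*)=\Ra((WA)^k)^{\perp}$. The $\supseteq$ inclusion is immediate from $\Nu((WA)^{\odagger})=\Ra((WA)^k)^{\perp}$. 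The reverse inclusion is the main obstacle: if $A[(WA)^{\odagger}]^{m+2}y=0$ then $[(WA)^{\odagger}]^{m+2}y$ lies in $\Nu(A)\cap\Ra((WA)^k)$, which collapses to $\{0\}$ because $\Nu(A)\cap\Ra((WA)^k)\subseteq\Nu(WA)\cap\Ra((WA)^k)=\{0\}$ via the Drazin decomposition at index at most $k$. A backward induction then peels $(WA)^{\odagger}$ off one factor at a time---exploiting that $(WA)^{\odagger}$ is a bijection of $\Ra((WA)^k)$ and that the intermediate iterates $[(WA)^{\odagger}]^jy$ already live in that range---until the chain forces $y\in\Nu((WA)^{\odagger})$. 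Once this descent is in place, the remainder of the theorem reduces to straightforward bookkeeping on top of the projector absorption and the weighted Drazin identities.
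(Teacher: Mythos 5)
Your proposal is correct, and for items (a), (d), (e), (f) it is essentially the paper's own argument: everything is pushed back to $A^{\weak_m,W}$ via the defining formula $A^{\core_m,W}=A^{\weak_m,W}P_{(WA)^m}$, and the projector $P_{(WA)^m}$ is absorbed using the inclusion $\Ra((WA)^k)\subseteq\Ra((WA)^m)$ (the paper packages this as Lemma \ref{lemma projector 1} together with a factorization $(WA)^{\weak_m}=(WA)^kB$). You diverge in two places. For (b) the paper starts from the representation $A^{\core_m,W}=A[(WA)^d]^{m+2}P_{(WA)^k}(WA)^mP_{(WA)^m}$ of Proposition \ref{proposition 1}(d) and finishes with the weighted Drazin identities \eqref{properties w drazin}, whereas you start from $A^{\weak_m,W}=A[(WA)^{\weak_m}]^2$ and use the power identities $B^{\weak_m}B^{k+1}=B^k$ and $B^{\weak_m}B^{k}=B^dB^k$; both computations are valid and of comparable length. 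For the null-space part of (c) the paper simply quotes $\Nu(A^{\weak_m,W})=\Nu([(WA)^k]^*(WA)^m)$ from \cite[Lemma 2.2]{MoStKa} and applies Lemma \ref{lemma null space}, while you reprove that fact from scratch: your descent argument (using that $\Nu(A)\cap\Ra((WA)^k)=\{0\}$ and that $(WA)^{\odagger}$ restricts to a bijection of $\Ra((WA)^k)$, so the factors of $[(WA)^{\odagger}]^{m+2}$ can be peeled off one at a time) is sound and has the advantage of making the proof self-contained, at the cost of some extra work the paper delegates to the cited reference.
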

\begin{proof}
(a) By Lemma 2.2 in \cite{MoStKa}, we know that $\Ra(WAWA^{\weak_m,W})=\Ra((WA)^k)$. Thus, $WAWA^{\weak_m,W}=(WA)^kB$, for some matrix $B$. Thus, by applying Lemma \ref{lemma projector 1} and Lemma \ref{lemma more properties of weighted WG} (a), we have
\begin{eqnarray*}
A^{\core_m,W}WAWA^{\core_m,W} &=& A^{\weak_m,W}P_{(WA)^m}[WAWA^{\weak_m,W}]P_{(WA)^m} \\
&=& A^{\weak_m,W}P_{(WA)^m}(WA)^k B P_{(WA)^m} \\
&=& A^{\weak_m,W}(WA)^k B P_{(WA)^m} \\
&=&A^{\weak_m,W}WAWA^{\weak_m,W}P_{(WA)^m}\\
&=&A^{\weak_m,W}P_{(WA)^m}\\
&=&A^{\core_m,W}.
\end{eqnarray*}
(b) By Proposition \ref{proposition 1} (d), we know that $A^{\core_m,W}=A[(WA)^d]^{m+2}P_{(WA)^k}(WA)^mP_{(WA)^m}$. So, from Lemma \ref{lemma projector 1} and \eqref{properties w drazin}, we have
\begin{eqnarray*}
A^{\core_m,W}W(AW)^{k+1} &=& A[(WA)^d]^{m+2}P_{(WA)^k}(WA)^mP_{(WA)^m}W(AW)^{k+1} \\
&=& A[(WA)^d]^{m+2}P_{(WA)^k}(WA)^m[P_{(WA)^m}(WA)^{k+1}]W\\
&=& A[(WA)^d]^{m+2}(WA)^{m+k+1}W \\
&=& [A(WA)^d] (WA)^k W \\
&=& AWA^{d,W}(WA)^k W \\
&=& A^{d,W} W A (WA)^k W \\
&=& A^{d,W} W (AW)^{k+1}\\
&=& (AW)^k.
\end{eqnarray*}
(c) The equality $\Ra(A^{\core_m, W})=\Ra((AW)^k)$ follows from (b) and Proposition \ref{proposition 1} (h). On the other hand, from \cite[Lemma 2.2]{MoStKa} we know that $\Nu(A^{\weak_m,W})=\Nu([(WA)^k]^*(WA)^m)$. Thus, from \eqref{def weighted m-weak core} and Lemma \ref{lemma null space} we deduce $\Nu(A^{\core_m,W})=\Nu([(WA)^k]^*(WA)^m P_{(WA)^m})$.\\
(d) As $(WA)^{\weak_m}=(WA)^kB$ for some matrix $B$, from Lemma \ref{lemma projector 1} and parts (c) and (d) of Lemma \ref{lemma more properties of weighted WG} we obtain
\begin{eqnarray*}
AWA^{\core_m,W}WA^{\core_m,W} &=& A[WA^{\weak_m,W}]P_{(WA)^m}[WA^{\weak_m,W}]P_{(WA)^m} \\
&=& A (WA)^{\weak_m} P_{(WA)^m} (WA)^{\weak_m} P_{(WA)^m} \\
&=& A (WA)^{\weak_m} P_{(WA)^m} (WA)^k B P_{(WA)^m} \\
&=& A [(WA)^{\weak_m}]^2 P_{(WA)^m} \\
&=& A^{\weak_m,W}P_{(WA)^m} \\
&=& A^{\core_m,W}.
\end{eqnarray*}
(e) From \eqref{def weighted m-weak core} and Lemma \ref{lemma more properties of weighted WG} (c) we have \[AWA^{\core_m,W}=A[WA^{\weak_m}]P_{(WA)^m}=A(WA)^{\weak_m}P_{(WA)^m}=A(WA)^{\core_m}.\]
Statement (f) directly follows from \eqref{def weighted m-weak core}.
\end{proof}

We finish this section with a result that shows that the $W$-weighted $m$-weak core inverse can be written as  a generalized inverse with prescribed range and null space. Also,  some special idempotent matrices determined by such a inverse  are considered.
Recall that the outer inverse of a matrix $A\in \Cm$
which is uniquely determined by the null space $S$ and the column space $T$ is denoted by $A^{(2)}_{T,S}=X\in \Cn$ and satisfies 
$XAX=X$, $\Nu(X)=S$, and $\Ra(X)=T$,
where $s\le r=\ra(A)$ is  dimension of the subspace $T\subseteq \C^n$ and $p-s$ is  dimension of the subspace $S\subseteq \C^p$.

\begin{proposition}\label{proposition 2} Let $A\in \Cm$, $0\neq W\in \Cn$, $k=\max\{\ind(AW), \ind(WA)\}$, and $m\in \mathbb{N}$. Then the following representations are valid:
\begin{enumerate}[\rm (a)]
\item $A^{\core_m,W}=(WAW)^{(2)}_{\Ra(((AW)^k)),\,\Nu([(WA)^k]^*(WA)^m P_{(WA)^m})}$.
\item $WAWA^{\core_m,W}=P_{\Ra((WA)^k),\,\Nu([(WA)^k]^*(WA)^m P_{(WA)^m})}$.
\item $A^{\core_m,W}WAW=P_{\Ra((AW)^k),\,\Nu([(WA)^k]^*(WA)^m P_{(WA)^m}WAW)}$.
\item $A^{\core_m,W}=(AW)^k ( [(WA)^k]^* (WA)^{m+k+1} W)^\dag [(WA)^k]^* (WA)^{2m} [(WA)^m]^\dag$.
\end{enumerate}
\end{proposition}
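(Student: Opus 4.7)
My strategy is to establish (a) directly from Theorem~\ref{theorem more properties}, and then to deduce (b), (c), and (d) from (a) as straightforward consequences.

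For (a), I would verify the three defining properties of the $(2)$-inverse with prescribed range and null space: $A^{\core_m,W}(WAW)A^{\core_m,W} = A^{\core_m,W}$, $\Ra(A^{\core_m,W}) = \Ra((AW)^k)$, and $\Nu(A^{\core_m,W}) = \Nu([(WA)^k]^*(WA)^m P_{(WA)^m})$. The first is exactly Theorem~\ref{theorem more properties}(a), while the other two are Theorem~\ref{theorem more properties}(c). The compatibility condition $\rank((AW)^k) = \rank((WA)^k)$ needed for the $(2)$-inverse notation is immediate from the identities in~\eqref{properties w drazin}, since both ranks coincide with $\rank(A^{d,W})$.

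For (b) and (c), the idempotency of $E := WAWA^{\core_m,W}$ and $F := A^{\core_m,W}WAW$ follows at once from Theorem~\ref{theorem more properties}(a). For (b), I would bound $\Ra(E) = WAW\cdot\Ra((AW)^k) = \Ra((WA)^{k+1}W) \subseteq \Ra((WA)^k)$, and confirm equality by a rank count: $\rank(E) = \rank(A^{\core_m,W})$ (from $A^{\core_m,W} = A^{\core_m,W}E$) $= \rank((WA)^k)$. The trivial inclusion $\Nu(A^{\core_m,W}) \subseteq \Nu(E)$ together with matching dimensions gives the kernel equality, and Theorem~\ref{theorem more properties}(c) identifies it with the claimed set. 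For (c), a symmetric argument yields $\Ra(F) = \Ra(A^{\core_m,W}) = \Ra((AW)^k)$, while $x\in\Nu(F)$ iff $WAWx \in \Nu(A^{\core_m,W})$, giving $\Nu(F) = \Nu([(WA)^k]^*(WA)^m P_{(WA)^m}WAW)$ directly.

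For (d), I would invoke the Urquhart-type representation: whenever $B^{(2)}_{\Ra(L),\Nu(K)}$ exists and $\rank(KBL)=\rank(L)=\rank(K)$, one has $B^{(2)}_{\Ra(L),\Nu(K)}=L(KBL)^\dag K$. Choosing $B = WAW$, $L = (AW)^k$, and $K = [(WA)^k]^*(WA)^m P_{(WA)^m}$ in agreement with (a), the central computation is to simplify $KBL$. Using $WAW(AW)^k = (WA)^{k+1}W$ and the key identity $P_{(WA)^m}(WA)^{k+1} = (WA)^{k+1}$, which holds because $\Ra((WA)^{k+1}) = \Ra((WA)^k) \subseteq \Ra((WA)^m)$ by the nesting of the ranges of powers of $WA$, one obtains $KBL = [(WA)^k]^*(WA)^{m+k+1}W$. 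Substituting into $L(KBL)^\dag K$ and rewriting $(WA)^m P_{(WA)^m} = (WA)^{2m}[(WA)^m]^\dag$ then produces exactly the formula in (d).

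The chief obstacle will be verifying the rank hypothesis $\rank(KBL) = r$ required by the Urquhart formula. Using $(WA)^{m+k+1}W = W(AW)^{m+k+1}$ and the fact that $[(WA)^k]^*$ is injective on $\Ra((WA)^k)$, this reduces to showing that $W$ is injective on $\Ra((AW)^k)$. I would deduce this from $\Ra(A^{d,W}) = \Ra((AW)^k)$ together with $\rank(WA^{d,W}) = \rank((WA)^d) = r$, which forces $W$ restricted to $\Ra(A^{d,W})$ to be injective.
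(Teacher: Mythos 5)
Your proposal is correct and follows essentially the same route as the paper: (a) is read off from Theorem \ref{theorem more properties} (a) and (c), (b) and (c) come from idempotency plus identification of range and null space, and (d) is (a) combined with the Urquhart formula. The only differences are cosmetic — you establish $\Ra(WAWA^{\core_m,W})=\Ra((WA)^k)$ by a rank count where the paper exhibits the reverse inclusion explicitly via $(WA)^{k+2}$, and you spell out the rank hypotheses for Urquhart that the paper leaves implicit.
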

\begin{proof}
(a) It follows from parts (a) and (c) of Theorem \ref{theorem more properties}.\\
(b) Clearly, Theorem \ref{theorem more properties} (a) implies that  $WAWA^{\core_m, W}$ is idempotent.  Moreover, from Theorem \ref{theorem more properties} (c) we obtain
\begin{eqnarray*}
\Ra(WAWA^{\core_m, W})&=& WAW\Ra(A^{\core_m, W})\\
&=& WAW\Ra((AW)^k) \\
&=& \Ra((WA)^{k+1}W),
\end{eqnarray*}
whence $\Ra(WAWA^{\core_m, W})\subseteq \Ra((WA)^k)$. Now, by Theorem \ref{theorem more properties} (b) we get $A^{\core_m,W} W (AW)^{k+1}=(AW)^k$ which implies $WAWA^{\core_m,W} W (AW)^{k+1}A=WAW(AW)^kA=(WA)^{k+2}$. Thus, $\Ra((WA)^k)=\Ra((WA)^{k+2})\subseteq \Ra(WAWA^{\core_m, W})$.\\
On the other hand, as $A^{\core_m, W}$ is an outer inverse of $WAW$, it is well known that
$\Nu(WAWA^{\core_m, W})=\Nu(A^{\core_m, W})$. Thus,  Theorem \ref{theorem more properties} (c)  yield to $\Nu(WAWA^{\core_m, W})=\Nu([(WA)^k]^*(WA)^m P_{(WA)^m})$. \\
(c) Since $A^{\core_m, W}WAWA^{\core_m, W}=A^{\core_m, W}$, clearly $\Ra(A^{\core_m,W}WAW)=\Ra(A^{\core_m,W})=\Ra((AW)^k)$. \\
The fact that $\Nu(A^{\core_m,W}WAW)=\Nu([(WA)^k]^*(WA)^m P_{(WA)^m}WAW)$ follows from Lemma \ref{lemma null space} and Theorem \ref{theorem more properties} (c).\\
(d) This expression follows by part (a) and Urquhart formula \cite{BeGr}.
\end{proof}

\begin{corollary} Let $A\in \Cnn$, $k=\ind(A)$, and $m\in \mathbb{N}$. Then the following statements hold:
\begin{enumerate}[\rm (a)]
\item $A^{\core_m}=A^{(2)}_{\Ra(A^k),\,\Nu((A^k)^*A^mP_{A^m})}$.
\item $AA^{\core_m}=P_{\Ra(A^k),\,\Nu((A^k)^*A^mP_{A^m})}$.
\item $A^{\core_m}A=P_{\Ra(A^k),\,\Nu((A^k)^*A^mP_{A^m}A)}$.
\item $A^{\core_m}=A^k ( (A^k)^* A^{m+k+1})^\dag (A^k)^* A^{2m} (A^m)^\dag$.
\end{enumerate}
\end{corollary}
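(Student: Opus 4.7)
The plan is to derive the corollary as the direct square-case specialization of Proposition \ref{proposition 2} with the choice $p=n$ and $W=I_n$. Under this substitution every ingredient appearing in the statement of Proposition \ref{proposition 2} collapses to its unweighted counterpart, and the four claims of the corollary drop out term by term.

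Concretely, with $W=I_n$ we have $AW=WA=A$, so $k=\max\{\ind(AW),\ind(WA)\}$ becomes $\ind(A)$, in agreement with the corollary; moreover $WAW=A$, $(AW)^{k}=(WA)^{k}=A^{k}$, and $P_{(WA)^{m}}=P_{A^{m}}$. The defining identity \eqref{def weighted m-weak core} then gives
\[
A^{\core_m,I_n}=A^{\weak_m,I_n}P_{A^m}=A^{\weak_m}P_{A^m}=A^{\core_m},
\]
using that the $W$-weighted $m$-weak group inverse already satisfies $A^{\weak_m,I_n}=A^{\weak_m}$. Hence the left-hand sides in Proposition \ref{proposition 2} reduce to the left-hand sides in the corollary.

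Finally I would substitute these identifications into parts (a)--(d) of Proposition \ref{proposition 2} one at a time. Part (a) becomes $A^{\core_m}=A^{(2)}_{\Ra(A^k),\,\Nu((A^k)^*A^mP_{A^m})}$ because $(WAW)^{(2)}_{T,S}$ reduces to $A^{(2)}_{T,S}$. Parts (b) and (c) follow because the range/kernel data of the idempotents $WAW\cdot A^{\core_m,W}$ and $A^{\core_m,W}\cdot WAW$ reduce verbatim to those of $AA^{\core_m}$ and $A^{\core_m}A$, respectively (the extra factor $A$ inside the null space in (c) coming from the right-hand $WAW$ collapsing to $A$). Part (d) follows by erasing every $W$ in the Urquhart-type formula in Proposition \ref{proposition 2} (d).

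Since every step is a literal substitution, there is no genuine obstacle; the only bookkeeping is to check that the mixed powers of $AW$ and $WA$ appearing in Proposition \ref{proposition 2} (d) indeed all collapse to the same power of $A$, which is automatic from $AW=WA=A$.
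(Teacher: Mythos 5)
Your proposal is correct and is exactly the paper's argument: the corollary is obtained by specializing Proposition \ref{proposition 2} to $p=n$ and $W=I_n$, under which all weighted objects collapse to their unweighted counterparts. The paper states this in one line; your additional bookkeeping (e.g.\ the extra factor $A$ in the null space of part (c) coming from the right-hand $WAW$) is a faithful expansion of the same reduction.
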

\begin{proof}
(a)-(d) immediately follow from Proposition \ref{proposition 2} by taking $p=n$ and $W=I_n$.
\end{proof}

\section{Algebraic characterizations of the $W$-weighted $m$-core inverse}

In this section, we give some algebraic characterizations of the $W$-weighted $m$-weak core inverse.

\begin{theorem} \label{characterization 1} Let $A \in \Cm$, $0\neq W\in \Cn$, $k=\max\{\ind(AW), \ind(WA)\}$, and $m\in\mathbb{N}$. Then  the system of equations
\begin{equation}\label{system 1}
XWAWX=X, \quad AWX=A(WA)^{\core_m},\quad XWA=A^{\weak_m,W}P_{(WA)^m} WA,
\end{equation}
is consistent and its unique solution is the matrix $X=A^{\core_m,W}$.
\end{theorem}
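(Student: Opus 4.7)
My plan is to split the argument into a routine existence check and a short uniqueness reduction, both driven by Theorem \ref{theorem more properties} and the defining identity \eqref{def weighted m-weak core}.

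For consistency, I would simply verify that $X=A^{\core_m,W}$ solves each of the three equations: the identity $XWAWX=X$ is exactly Theorem \ref{theorem more properties} (a); the identity $AWX=A(WA)^{\core_m}$ is Theorem \ref{theorem more properties} (e); and $XWA=A^{\weak_m,W}P_{(WA)^m}WA$ is Theorem \ref{theorem more properties} (f). No computation is needed beyond invoking these three parts.

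For uniqueness, assume $X$ satisfies \eqref{system 1}. Using \eqref{def weighted m-weak core}, the third equation rewrites as $XWA=A^{\core_m,W}WA$, and post-multiplication by $W$ yields $XWAW=A^{\core_m,W}WAW$. Substituting into the first equation gives
\[
X = XWAWX = A^{\core_m,W}WAWX = A^{\core_m,W}W(AWX).
\]
By the second equation together with Theorem \ref{theorem more properties} (e), we have $AWX = A(WA)^{\core_m} = AWA^{\core_m,W}$, so
\[
X = A^{\core_m,W}WAWA^{\core_m,W} = A^{\core_m,W},
\]
where the last equality is Theorem \ref{theorem more properties} (a). Hence $X=A^{\core_m,W}$ is the unique solution.

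The only conceptual point worth flagging is that the three equations cooperate in an essential way: equation 3 controls $XWAW$ on the left while equation 2 controls $AWX$ on the right, and only the combination of both, inserted at the two appropriate positions inside $XWAWX$, collapses to $A^{\core_m,W}WAWA^{\core_m,W}$. In particular, pairing only equation 1 with equation 2, or only equation 1 with equation 3, would not suffice to pin down $X$; this is precisely why all three conditions appear in the statement. Once the substitutions are made, Theorem \ref{theorem more properties} (a) closes the argument at once, so no serious obstacle is expected.
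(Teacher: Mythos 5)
Your proof is correct and follows essentially the same route as the paper: existence by citing parts (a), (e), (f) of Theorem \ref{theorem more properties}, and uniqueness by substituting equation (3) on the left and equation (2) on the right inside $XWAWX$ (the paper phrases this as comparing two arbitrary solutions $X_1,X_2$ rather than comparing $X$ with $A^{\core_m,W}$, but the algebra is identical). No gaps.
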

\begin{proof}
\emph{Existence.} Let $X:=A^{\core_m,W}$. From parts (a), (e) and (f) of Theorem \ref{theorem more properties}, it is easy to see that $X$ satisfies  all the  equations in \eqref{system 1}. \\
\emph{Uniqueness.} We assume the $X_1$ and $X_2$ are two solutions of the system of equations in (\ref{system 1}). Then 
\[X_2=(X_2WA)X_2=(X_1WA)WX_2=X_1W(AWX_2)=X_1WAWX_1=X_1.\]
 \end{proof}

\begin{theorem}\label{characterization 2} Let $A \in \Cm$, $0\neq W\in \Cn$, $k=\max\{\ind(AW), \ind(WA)\}$, and $m\in\mathbb{N}$. The system given by
\begin{equation}\label{system 2}
AWX=A(WA)^{\core_m}, \quad \Ra(X)\subseteq\Ra((AW)^k),
\end{equation}
is consistent and its unique solution is $X=A^{\core_m,W}$.
\end{theorem}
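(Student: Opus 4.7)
The plan is to split into existence and uniqueness, both of which follow quickly from results already established in Section 2, with the only nontrivial ingredient being the core-nilpotent decomposition of $AW$.

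For existence, I would simply take $X = A^{\core_m,W}$ and invoke Theorem \ref{theorem more properties}. The first equation $AWX = A(WA)^{\core_m}$ is exactly Theorem \ref{theorem more properties}(e). The range inclusion $\Ra(X) \subseteq \Ra((AW)^k)$ follows immediately from the equality $\Ra(A^{\core_m,W}) = \Ra((AW)^k)$ in Theorem \ref{theorem more properties}(c). So this half is essentially a citation.

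For uniqueness, the argument used in Theorem \ref{characterization 1} (which relies on having an expression for $XWA$ as well as for $AWX$) is not available here, so I would instead exploit a geometric fact about the square matrix $AW \in \Cmm$. Suppose $X_1, X_2 \in \Cm$ both satisfy (\ref{system 2}), and let $Z := X_1 - X_2$. Then $AWZ = 0$ and $\Ra(Z) \subseteq \Ra((AW)^k)$. Since $k \ge \ind(AW)$, the standard core-nilpotent decomposition gives $\C^p = \Ra((AW)^k) \oplus \Nu((AW)^k)$. Combined with the trivial inclusion $\Nu(AW) \subseteq \Nu((AW)^k)$, this forces $\Ra((AW)^k) \cap \Nu(AW) = \{0\}$. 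Hence every column of $Z$ lies in this trivial intersection, so $Z = 0$ and $X_1 = X_2 = A^{\core_m,W}$.

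The only step that requires any real care is the uniqueness half, and even there the difficulty is merely of presentation: one needs to note that the two hypotheses together trap the columns of any candidate solution in a subspace on which $AW$ acts injectively. No new calculations are needed, and no further properties of the $W$-weighted $m$-weak core inverse beyond Theorem \ref{theorem more properties}(c) and (e) enter the argument.
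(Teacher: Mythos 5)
Your proposal is correct and follows essentially the same route as the paper: existence by citing Theorem \ref{theorem more properties}(c) and (e), and uniqueness by trapping $\Ra(X_1-X_2)$ inside $\Ra((AW)^k)\cap\Nu((AW)^k)=\{0\}$ via $\Nu(AW)\subseteq\Nu((AW)^k)$. Your explicit appeal to the core--nilpotent decomposition merely spells out why that intersection is trivial, which the paper takes as known.
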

\begin{proof}
\emph{Existence.} Let $X:=A^{\core_m}$. Clearly, from parts (c) and (e) of Theorem \ref{theorem more properties}, $X$ satisfies both conditions in \eqref{system 2}.  \\
\emph{Uniqueness.}  Let $X_1$ and $X_2$ be two solutions of the system \eqref{system 2},
that is, \[AWX_1=AWX_2=A(WA)^{\core_m}, \quad \Ra(X_1)\subseteq \Ra((AW)^k), \quad \Ra(X_2)\subseteq\Ra((AW)^k).\]
Thus,   $\Ra(X_1-X_2)\subseteq \Nu(AW)\subseteq \Nu((AW)^k)$ and
$\Ra(X_1-X_2)\subseteq \Ra((AW)^k).$ Therefore, $\Ra(X_1-X_2)\subseteq \Ra((AW)^k)\cap \Nu((AW)^k)=\{0\}$. Thus, $X_1=X_2.$
\end{proof}

The first equation in \eqref{system 2} can be weakened by the condition $WAWX=AW(AW)^{\core_m}$.

\begin{theorem}\label{characterization 2 bis} Let $A \in \Cm$, $0\neq W\in \Cn$, $k=\max\{\ind(AW), \ind(WA)\}$, and $m\in\mathbb{N}$. The system given by
\begin{equation}\label{system 2 bis}
WAWX=WA(WA)^{\core_m}, \quad \Ra(X)\subseteq\Ra((AW)^k),
\end{equation}
is consistent and its unique solution is $X=A^{\core_m,W}$.
\end{theorem}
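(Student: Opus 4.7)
The plan is to mirror the two-part template used for Theorem \ref{characterization 2}, verifying existence directly and then establishing uniqueness by exploiting the range restriction. The extra subtlety compared with \eqref{system 2} is that the first equation has been pre-multiplied by $W$, so the short argument ``$AW(X_1-X_2)=0$ combined with $\Ra(X_1-X_2)\subseteq \Ra((AW)^k)\cap \Nu((AW)^k)=\{0\}$'' is no longer available; I will need to use the range hypothesis more aggressively.

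For existence, I take $X:=A^{\core_m,W}$. Theorem \ref{theorem more properties}(c) already gives $\Ra(A^{\core_m,W})=\Ra((AW)^k)$, which supplies the required inclusion. Left-multiplying the identity $AWA^{\core_m,W}=A(WA)^{\core_m}$ from Theorem \ref{theorem more properties}(e) by $W$ produces $WAWA^{\core_m,W}=WA(WA)^{\core_m}$. So both conditions of \eqref{system 2 bis} are met with essentially no computation.

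For uniqueness, let $X_1,X_2$ be solutions and set $Y:=X_1-X_2$, so that $WAWY=0$ and $\Ra(Y)\subseteq \Ra((AW)^k)$. The range condition lets me write $Y=(AW)^k U$ for some matrix $U$. The decisive step is to reproduce the factor $(AW)^k$ by means of Theorem \ref{theorem more properties}(b), which says $A^{\core_m,W}W(AW)^{k+1}=(AW)^k$. Substituting yields
\[
Y=(AW)^k U=A^{\core_m,W}W(AW)^{k+1}U=A^{\core_m,W}(WAW)(AW)^k U=A^{\core_m,W}(WAWY)=0,
\]
whence $X_1=X_2$.

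The main obstacle is precisely this uniqueness step: compared with Theorem \ref{characterization 2}, I no longer control $AWY$ directly, only $WAWY$, so the clean intersection $\Ra((AW)^k)\cap\Nu(AW)=\{0\}$ is unavailable. The trick of rewriting $(AW)^k$ via Theorem \ref{theorem more properties}(b) effectively trades the missing factor $AW$ for $A^{\core_m,W}WAW$, thereby converting the hypothesis $WAWY=0$ into the desired conclusion $Y=0$ without invoking any further decomposition of $\C^p$.
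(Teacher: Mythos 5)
Your proof is correct, and the uniqueness argument is a genuinely different (and self-contained) route from the one the paper intends. The paper merely says the proof is ``similar to Theorem \ref{characterization 2}'', i.e.\ it again runs the intersection argument $\Ra(X_1-X_2)\subseteq \Ra((AW)^k)\cap\Nu((AW)^k)=\{0\}$; the only adjustment needed there is that $WAWY=0$ gives $(AW)^2Y=A(WAWY)=0$, hence $\Ra(Y)\subseteq\Nu((AW)^2)\subseteq\Nu((AW)^k)$ (the last inclusion because the null spaces of powers of $AW$ stabilize at $\ind(AW)\le k$). So your remark that the short argument ``is no longer available'' is an overstatement --- it survives at the cost of one extra left factor of $A$ and a one-line index observation. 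Your alternative replaces that rank/nullity intersection entirely: writing $Y=(AW)^kU$ and reproducing $(AW)^k$ via Theorem \ref{theorem more properties}(b) as $A^{\core_m,W}W(AW)^{k+1}$, you convert $Y$ into $A^{\core_m,W}WAWY=0$ directly. This buys you a purely computational uniqueness proof that never invokes $\Ra((AW)^k)\oplus\Nu((AW)^k)=\C^p$, at the price of relying on the identity in part (b); the paper's route is more elementary in its ingredients but needs the small extra index argument to be airtight. The existence half is identical in both: part (c) gives the range condition and left-multiplying part (e) by $W$ gives the equation.
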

\begin{proof}
It is similar to the proof of the Theorem \ref{characterization 2}.
\end{proof}

\begin{remark} When $m\ge k$ in Theorem \ref{characterization 2 bis} , we recover the characterization for the $W$-weighted core-EP inverse given in \eqref{w cep system}. In fact, if $m\ge k$ then $(WA)^{\core_m}=(WA)^{\odagger}$. Thus, $WAWX=WA(WA)^{\core_m}=WA(WA)^{\odagger}=P_{(WA)^k}$.
\end{remark}

\begin{theorem}\label{characterization 3} Let $A \in \Cm$, $0\neq W\in \Cn$, $k=\max\{\ind(AW), \ind(WA)\}$, and $m\in\mathbb{N}$. The system given by
\begin{equation}\label{system 3}
AWX=A(WA)^{\core_m}, \quad AWXWX=X,
\end{equation}
is consistent and its unique solution is $X=A^{\core_m,W}$.
\end{theorem}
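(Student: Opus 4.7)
The plan is to handle existence and uniqueness separately: existence will be immediate from Theorem \ref{theorem more properties}, while uniqueness requires passing from the rectangular matrix $X$ to the square matrix $WX$ and exploiting the core--nilpotent decomposition attached to $(WA)^k$.

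For existence, I set $X:=A^{\core_m,W}$. Then the first equation of \eqref{system 3}, $AWX=A(WA)^{\core_m}$, is precisely Theorem \ref{theorem more properties} (e), and the second equation, $AWXWX=X$, is precisely Theorem \ref{theorem more properties} (d). So no further computation is needed.

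For uniqueness, I take two solutions $X_1,X_2$ and set $Z:=X_1-X_2$. The first equation gives $AWZ=0$. Substituting $AWX_i=A(WA)^{\core_m}$ into $AWX_iWX_i=X_i$ yields $X_i=A(WA)^{\core_m}WX_i$, so subtracting gives $Z=A(WA)^{\core_m}WZ$. Pre-multiplying by $W$ and writing $U:=WZ\in\Cnn$, I obtain the pair
\[
(WA)U=0,\qquad U=(WA)(WA)^{\core_m}U.
\]

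The main step, and really the only delicate point, will be to kill $U$. By the square-case Corollary at the end of Section 2 (applied with $A$ replaced by $WA$), the matrix $(WA)(WA)^{\core_m}$ is the projector $P_{\Ra((WA)^k),\,\Nu(((WA)^k)^*(WA)^mP_{(WA)^m})}$, so the second identity above forces $\Ra(U)\subseteq\Ra((WA)^k)$. The first identity gives $\Ra(U)\subseteq\Nu(WA)\subseteq\Nu((WA)^k)$. Since $k\ge\ind(WA)$, the standard core--nilpotent decomposition $\C^n=\Ra((WA)^k)\oplus\Nu((WA)^k)$ forces $U=WZ=0$, and then $Z=A(WA)^{\core_m}WZ=0$, i.e. $X_1=X_2$.
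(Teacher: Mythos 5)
Your proof is correct. The existence half is the same as the paper's (which points to Theorem \ref{theorem more properties}, the relevant items being (d) and (e)). For uniqueness you take a genuinely different route: the paper iterates the second equation to obtain $X=(AW)^k(XW)^kX$, deduces $\Ra(X)\subseteq\Ra((AW)^k)$, and then reduces the whole problem to the already-proved Theorem \ref{characterization 2}, whose uniqueness step runs the direct sum $\C^p=\Ra((AW)^k)\oplus\Nu((AW)^k)$ on the $AW$ side. You instead subtract two solutions, derive $Z=A(WA)^{\core_m}WZ$ together with $AWZ=0$, and transfer everything to the square matrix $U=WZ$ on the $WA$ side, where the square-case Corollary gives $(WA)(WA)^{\core_m}=P_{\Ra((WA)^{\ind(WA)}),\,\cdot}$ and hence traps $\Ra(U)$ inside $\Ra((WA)^{\ind(WA)})\cap\Nu((WA)^{\ind(WA)})=\{0\}$; note that $\Ra((WA)^{\ind(WA)})=\Ra((WA)^k)$ since $k\ge\ind(WA)$, so your use of $k$ there is harmless. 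Both arguments bottom out in the same core--nilpotent direct sum, so neither is deeper than the other; the paper's version recycles Theorem \ref{characterization 2} and makes explicit the range inclusion $\Ra(X)\subseteq\Ra((AW)^k)$ satisfied by any solution, while yours is self-contained and avoids the iteration step. A small simplification: you do not need the full projector description of $(WA)(WA)^{\core_m}$, only the range identity $\Ra\bigl((WA)(WA)^{\core_m}\bigr)=\Ra((WA)^{\ind(WA)})$, which already follows from $\Ra((WA)^{\core_m})=\Ra((WA)^{\ind(WA)})$.
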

\begin{proof}
\emph{Existence.} Let $X:=A^{\core_m,W}$. From parts (c) and (d) of Theorem \ref{theorem more properties} we deduce that $X$ satisfies  the two equations in \eqref{system 3}.  \\
\emph{Uniqueness.}  Let $X$ be an arbitrary solution of the system \eqref{system 3}. Note that $AWXWX=X$ implies $X=(AW)^k (XW)^kX$, whence $\Ra(X)\subseteq \Ra((AW)^k)$. Now, the uniqueness is due to Theorem \ref{characterization 2}.
\end{proof}

As a consequence of above results, we obtain characterizations of the  $m$-weak core inverse of a complex square matrix.

\begin{corollary}\label{corollary square 2}
Let $A \in \Cnn$, $\ind(A)=k$, and $m\in \mathbb{N}$. Then the following statements are equivalent:
\begin{enumerate}[{\rm(a)}]
\item $X$ is the  $m$-weak core inverse $A^{\core_m}$ of $A$;
\item $XAX=X$, $AX=(A^{\odagger})^m A^m P_{A^m}$, and $XA=(A^{\odagger})^{m+1}A^m P_{A^m}A$;
\item $AX=(A^{\odagger})^m A^m P_{A^m}$ and $\Ra(X)\subseteq \Ra(A^k)$;
\item $AX=(A^{\odagger})^m A^m P_{A^m}$ and $AX^2=X$.
\end{enumerate}
\end{corollary}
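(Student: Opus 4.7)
The plan is to obtain the corollary as a direct specialization of the three characterization theorems proved in this section, namely Theorems \ref{characterization 1}, \ref{characterization 2}, and \ref{characterization 3}, to the square case $p=n$ with $W=I_n$. Under this substitution all weighted objects collapse to their classical counterparts: $A^{\core_m,I_n}=A^{\core_m}$, $A^{\weak_m,I_n}=A^{\weak_m}$, $(AW)^k=(WA)^k=A^k$, $(WA)^{\core_m}=A^{\core_m}$, and $P_{(WA)^m}=P_{A^m}$. So the skeleton is clear; the real work is matching the shapes of the right-hand sides in the weighted systems to the ones appearing in (b), (c), (d).

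Before invoking the theorems, I would first record two identities to translate the weighted notation. Setting $W=I_n$ in \eqref{def weighted m-weak group} gives $A^{\weak_m}=(A^{\odagger})^{m+1}A^m$, and the second defining relation of the $m$-weak group inverse in \eqref{def m-weak group} yields $AA^{\weak_m}=(A^{\odagger})^m A^m$. Combining the latter with $A^{\core_m}=A^{\weak_m}P_{A^m}$ we get
\[ A\cdot A^{\core_m}=AA^{\weak_m}P_{A^m}=(A^{\odagger})^m A^m P_{A^m}, \qquad A^{\weak_m}P_{A^m}A=(A^{\odagger})^{m+1}A^m P_{A^m}A.\]
These two formulas are exactly the right-hand sides occurring in (b).

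With the dictionary in place, each equivalence is obtained by a direct specialization. Theorem \ref{characterization 1} with $W=I_n$ reads: $X$ equals $A^{\core_m}$ iff $XAX=X$, $AX=A\cdot A^{\core_m}$, and $XA=A^{\weak_m}P_{A^m}A$, which is exactly (b) after substituting the two identities above. Theorem \ref{characterization 2} gives $AX=A\cdot A^{\core_m}=(A^{\odagger})^m A^m P_{A^m}$ together with $\Ra(X)\subseteq\Ra(A^k)$, which is (c). Theorem \ref{characterization 3} becomes $AX=(A^{\odagger})^m A^m P_{A^m}$ and $AX^2=X$, which is (d); here one uses $AWXWX=AX^2$ when $W=I_n$. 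Chaining these three equivalences through (a) finishes the proof.

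I do not expect any genuine obstacle: the argument is purely a bookkeeping translation from the weighted setting to the unweighted one. The only point requiring care is the explicit rewriting of $A\cdot A^{\core_m}$ and of $A^{\weak_m}P_{A^m}A$ in the forms that appear in (b), which is why I would state those two identities at the outset rather than bury them inside each case.
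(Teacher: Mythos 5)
Your proposal is correct and matches the paper's intent exactly: the corollary is stated there as an immediate consequence of Theorems \ref{characterization 1}, \ref{characterization 2} and \ref{characterization 3} specialized to $p=n$, $W=I_n$, which is precisely your argument. The two translation identities you record, $AA^{\weak_m}=(A^{\odagger})^m A^m$ and $A^{\weak_m}=(A^{\odagger})^{m+1}A^m$, are the right bookkeeping needed to put the right-hand sides into the forms appearing in (b)--(d).
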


\section{Canonical form of the $W$-weighted $m$-weak core inverse}

As proved in \cite{Wang}, every matrix $A\in \Cnn$ of index $k$ admits a core-EP decomposition. The canonical form of a matrix  when is represented by its core-EP decomposition is given by
\begin{equation} \label{core EP decomposition}
A= U\left[\begin{array}{cc}
T & S\\
0 & N
\end{array}\right]U^*,
\end{equation}
where $U\in \Cnn$ is unitary, $T\in \mathbb{C}^{t\times t}$ is a nonsingular matrix with $t:=\ra(T)=\ra(A^k)$, and $N\in \mathbb{C}^{(n-t)\times (n-t)}$ is nilpotent of index $k$.

In \cite{FeLeTh1} the authors introduced a simultaneous unitary block upper triangularization of a pair of rectangular matrices generalizing the core-EP decomposition to the rectangular case. More precisely,  we have the following result:
\begin{theorem}{\rm\cite{FeLeTh1}}\label{decomposition}
Let $A\in\Cm$ and $0\neq W\in \Cn$ with  $k=\max\{\ind(AW), \ind(WA)\}$.
 Then there exist two unitary matrices
 $U \in \Cmm$, $V \in \Cnn$, two nonsingular matrices $A_1, W_1 \in \mathbb{C}^{t \times t}$, and two matrices $A_3 \in
\mathbb{C}^{(m-t)\times(n-t)}$ and $W_3 \in
\mathbb{C}^{(n-t)\times(m-t)}$ such that $A_3W_3$ and $W_3A_3$ are
nilpotent of indices $\ind(AW)$ and  $\ind(WA)$,
respectively, with
\begin{equation} \label{weighted CEP decomposition}
A = U\left[\begin{array}{cc}
A_1 & A_2 \\
0 & A_3
\end{array}\right]V^* \quad \text{and} \quad
W = V\left[\begin{array}{cc}
W_1 & W_2 \\
0 & W_3
\end{array}\right]U^*.
\end{equation}
\end{theorem}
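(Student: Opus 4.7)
The plan is to generalize the core-EP construction \eqref{core EP decomposition} to the rectangular pair $(A,W)$ by choosing $U$ and $V$ simultaneously aligned with the stabilized ranges of $AW$ and $WA$, respectively. First I would establish the rank equality $t := \ra((AW)^k) = \ra((WA)^k)$, which follows from $(AW)^{k+1} = A(WA)^k W$ and $(WA)^{k+1} = W(AW)^k A$ together with the stabilization of these ranks at the common index $k$. Then pick unitaries $U = [U_1 \mid U_2] \in \Cmm$ and $V = [V_1 \mid V_2] \in \Cnn$ whose leading $t$ columns $U_1$, $V_1$ form orthonormal bases of $\Ra((AW)^k)$ and $\Ra((WA)^k)$, respectively, and whose trailing columns complete each to an orthonormal basis of the ambient space.

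The crucial geometric input is the pair of cross-invariance relations
\[
A\,\Ra((WA)^k) \subseteq \Ra((AW)^k), \qquad W\,\Ra((AW)^k) \subseteq \Ra((WA)^k),
\]
which are immediate from $A(WA)^k = (AW)^k A$ and $W(AW)^k = (WA)^k W$. These translate directly into $U_2^* A V_1 = 0$ and $V_2^* W U_1 = 0$, which yield precisely the block-upper-triangular shape of \eqref{weighted CEP decomposition} with diagonal blocks $A_1 := U_1^* A V_1$, $A_3 := U_2^* A V_2$, $W_1 := V_1^* W U_1$, $W_3 := V_2^* W U_2$. Using the resolutions of the identity $V_1 V_1^* + V_2 V_2^* = I_n$ and $U_1 U_1^* + U_2 U_2^* = I_p$, together with the vanishing off-diagonal blocks just established, a short bookkeeping gives
\[
A_1 W_1 = U_1^*(AW) U_1, \quad A_3 W_3 = U_2^*(AW) U_2, \quad W_1 A_1 = V_1^*(WA) V_1, \quad W_3 A_3 = V_2^*(WA) V_2.
\]

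The right-hand sides are exactly the $(1,1)$ and $(2,2)$ diagonal blocks of the ordinary core-EP decomposition \eqref{core EP decomposition} of the square matrix $AW$ (in the basis $U$) and of $WA$ (in the basis $V$), the triangularity of which uses the $AW$-invariance of $\Ra((AW)^k)$ and the $WA$-invariance of $\Ra((WA)^k)$. Hence the first two products are nonsingular $t\times t$ matrices, forcing $A_1$ and $W_1$ to be nonsingular, while the last two products are precisely the nilpotent $N$-blocks of the scalar core-EP decompositions, of indices $\ind(AW)$ and $\ind(WA)$ as required. I expect the main obstacle to be the rank equality at the outset: without $\ra((AW)^k) = \ra((WA)^k)$ the two block partitions would not be conformable and the stated decomposition would not even make sense. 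A secondary subtlety is ensuring the nilpotency indices are \emph{exactly} $\ind(AW)$ and $\ind(WA)$ (not merely at most $k$); this is automatic because $k \geq \max\{\ind(AW),\ind(WA)\}$ guarantees that $U_1$ and $V_1$ are bases of the already-stabilized ranges, so the induced diagonal blocks coincide with those of the individual core-EP decompositions of $AW$ and $WA$.
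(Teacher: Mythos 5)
The paper does not prove this theorem; it is quoted verbatim from \cite{FeLeTh1}, so there is no internal proof to compare against. Your argument is correct and is, in substance, the standard construction: the rank identity $\ra((AW)^k)=\ra((WA)^k)$ follows exactly as you say from $(AW)^{k+1}=A(WA)^kW$ and $(WA)^{k+1}=W(AW)^kA$ together with rank stabilization at $k$; the cross-invariance relations $A\,\Ra((WA)^k)\subseteq\Ra((AW)^k)$ and $W\,\Ra((AW)^k)\subseteq\Ra((WA)^k)$ give $U_2^*AV_1=0$ and $V_2^*WU_1=0$; and the resolutions of the identity then yield $A_1W_1=U_1^*(AW)U_1$, $A_3W_3=U_2^*(AW)U_2$ and the analogous identities for $WA$. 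The one step you should write out rather than gesture at is the identification with Wang's decomposition: for \emph{any} unitary $U$ whose leading $t$ columns span $\Ra((AW)^k)$ one must check that $U_1^*(AW)U_1$ is invertible (because $AW$ restricted to $\Ra((AW)^k)$ is a bijection of that subspace, since $\Ra((AW)^{k+1})=\Ra((AW)^k)$) and that $U_2^*(AW)U_2$ is nilpotent of index exactly $\ind(AW)$ (one direction from $U_2^*(AW)^j=0$ whenever $\Ra((AW)^j)=\Ra((AW)^k)$, the other from a rank count showing that vanishing of the $j$-th power of the $(2,2)$ block forces $\ra((AW)^j)=t$). With that spelled out the proof is complete; nonsingularity of $A_1W_1$ then forces $A_1$ and $W_1$ to be individually nonsingular, as you note. (Incidentally, the block sizes $(m-t)\times(n-t)$ in the statement are a typo for $(p-t)\times(n-t)$ in this paper's notation; your construction produces the correct sizes.)
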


\begin{remark} \label{remark AW and WA}
If $A$ and $W$ are written as in (\ref{weighted CEP decomposition}). Then for each $\ell\in \mathbb{N}$ we have
\begin{equation}\label{AW^l and WA^l}
(AW)^\ell = U\left[\begin{array}{cc}
(A_1W_1)^\ell & \widetilde{S}_\ell\\
0 & (A_3W_3)^{\ell}
\end{array}\right]U^*, \quad (WA)^\ell = U\left[\begin{array}{cc}
(W_1A_1)^\ell & \widetilde{T}_\ell\\
0 & (W_3A_3)^{\ell}
\end{array}\right]U^*,
\end{equation}
where
\[\widetilde{S}_\ell:=\sum\limits_{j=0}^{{\ell}-1} (A_1W_1)^{j} (A_1W_2+A_2W_3) (A_3W_3)^{{\ell}-1-j}, \quad \widetilde{T}_\ell:=\sum\limits_{j=0}^{{\ell}-1} (W_1A_1)^{j} (W_1A_2+W_2A_3) (W_3A_3)^{{\ell}-1-j}.\]
The expressions given in \eqref{AW^l and WA^l} are the core-EP decomposition of $(AW)^\ell$ and $(WA)^\ell$, respectively.
In particular, if $\ell\ge k$ then $(A_3W_3)^{\ell}=0$ and $(W_3A_3)^{\ell}=0$ in \eqref{AW^l and WA^l}.
\end{remark}

The  $W$-weighted core-EP inverse of a rectangular matrix can be represented  by using the simultaneous decomposition given in \eqref{weighted CEP decomposition}:

\begin{equation}\label{canonical weighted CEP}
 A^{\odagger,W}= U\left[\begin{array}{cc}
(W_1A_1W_1)^{-1} & 0 \\
0 & 0
\end{array}\right]V^*.
\end{equation}
In \cite{FeLeTh1}, the authors also gave the following useful representations:
\begin{equation}\label{canonical form of AW and WA}
 (AW)^{\odagger}=U\left[\begin{array}{cc}
(A_1W_1)^{-1} & 0 \\
0 & 0
\end{array}\right]U^*, \quad (WA)^{\odagger}=V\left[\begin{array}{cc}
(W_1A_1)^{-1} & 0 \\
0 & 0
\end{array}\right]V^*.
\end{equation}

\begin{lemma}{\rm\cite{FeThTo}}
Let
$A=U\begin{bmatrix}
        A_1 & A_2 \\
        0 & A_3 \\
      \end{bmatrix}V^* \in \Cm$ be  such that  $A_1 \in \C^{t\times t}$ is nonsingular and $U \in {\mathbb C}^{m \times m}$ and $V \in {\mathbb C}^{n \times n}$ are unitary. Then
\begin{equation} \label{MP triangular}
A^\dag = V\left[\begin{array}{cc}
A_1^*\Omega & -A_1^*\Omega A_2 A_3^\dagger\\
(I_{n-t}-Q_{A_3})A_2^*\Omega & A_3^\dagger-(I_{n-t}-Q_{A_3})A_2^*\Omega A_2 A_3^\dagger
\end{array}\right]U^*,
\end{equation}
where $\Omega=(A_1 A_1^*+ A_2(I_{n-t}-Q_{A_3})A_2^*)^{-1}$.
In consequence,
\begin{equation} \label{projector}
P_A= U\left[\begin{array}{cc}
I_t & 0\\
0 & P_{A_3}
\end{array}\right]U^*.
\end{equation}
\end{lemma}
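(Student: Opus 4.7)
The plan is to exploit the unitary invariance of the Moore-Penrose inverse. Setting $M := \begin{bmatrix} A_1 & A_2 \\ 0 & A_3 \end{bmatrix}$, we have $A = UMV^*$ with $U, V$ unitary, hence $A^\dagger = V M^\dagger U^*$. It therefore suffices to show that the block matrix $X$ sandwiched between $V$ and $U^*$ in the statement equals $M^\dagger$, which I would verify by checking the four Penrose equations $MXM = M$, $XMX = X$, $(MX)^* = MX$, $(XM)^* = XM$.

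Before computing, I would note that $\Omega$ is well-defined and Hermitian: $A_1 A_1^*$ is positive definite because $A_1$ is nonsingular, while $A_2(I_{n-t} - Q_{A_3})A_2^* = [A_2(I_{n-t} - Q_{A_3})][A_2(I_{n-t} - Q_{A_3})]^*$ is positive semidefinite, so their sum is Hermitian positive definite. The central computation is then
\[
MX = \begin{bmatrix} I_t & 0 \\ 0 & P_{A_3} \end{bmatrix},
\]
obtained by direct block multiplication using the two key identities $[A_1 A_1^* + A_2(I_{n-t} - Q_{A_3})A_2^*]\Omega = I_t$ (for the $(1,1)$ block) and $A_3(I_{n-t} - Q_{A_3}) = 0$ (which annihilates the bottom-left block and the extraneous term in the bottom-right). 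This matrix is visibly Hermitian, verifying $(MX)^* = MX$, and it satisfies $MXM = M$ because $P_{A_3}A_3 = A_3$. The consequence $P_A = U\begin{bmatrix} I_t & 0 \\ 0 & P_{A_3} \end{bmatrix}U^*$ is then immediate from $P_A = AA^\dagger = U(MM^\dagger)U^* = U(MX)U^*$.

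For the two remaining equations, a block computation yields
\[
XM = \begin{bmatrix} A_1^*\Omega A_1 & A_1^*\Omega A_2(I_{n-t} - Q_{A_3}) \\ (I_{n-t} - Q_{A_3})A_2^*\Omega A_1 & Q_{A_3} + (I_{n-t} - Q_{A_3})A_2^*\Omega A_2(I_{n-t} - Q_{A_3}) \end{bmatrix},
\]
which is Hermitian because $\Omega$ and $I_{n-t} - Q_{A_3}$ are Hermitian and the off-diagonal blocks are mutual adjoints. Finally, $XMX = X$ reduces by associativity to $X \cdot (MX) = X$: right-multiplication by $\begin{bmatrix} I_t & 0 \\ 0 & P_{A_3} \end{bmatrix}$ preserves the first block column of $X$, and in the second block column it replaces each occurrence of $A_3^\dagger$ by $A_3^\dagger P_{A_3} = A_3^\dagger$, leaving that column unchanged. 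The proof presents no real obstacle; it is essentially block-matrix bookkeeping, the only mildly subtle point being the recognition that the $(1,1)$ block of $MX$ collapses to $I_t$ by the very definition of $\Omega$.
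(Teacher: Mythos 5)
Your proof is correct: the reduction to $M^\dagger$ by unitary invariance, the observation that $\Omega$ is Hermitian positive definite, and the block verification of all four Penrose equations (with $MX=\mathrm{diag}(I_t,P_{A_3})$ collapsing via the definition of $\Omega$ and $A_3(I_{n-t}-Q_{A_3})=0$) all check out, and the projector formula \eqref{projector} follows as you say. The paper states this lemma as a citation from \cite{FeThTo} without reproducing a proof, so there is nothing to compare against in the text itself; your direct verification is the standard argument one would expect there.
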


Now, we present a canonical form for the $W$-weighted $m$-weak core inverse inverse by using the weighted core-EP decomposition.

\begin{theorem}\label{canonical form weighted m-weak core}
Let $A \in \Cm$, $0\neq W\in \Cn$, $k=\max\{\ind(AW), \ind(WA)\}$, and $m\in \mathbb{N}$. If $A$ and $W$ are written  as in  (\ref{weighted CEP decomposition}), then  the $W$-weighted $m$-weak core inverse is given by
\begin{eqnarray}
A^{\core_m,W} &=& U\left[\begin{array}{cc}
(W_1A_1W_1)^{-1} &  (A_1W_1)^{-(m+1)}W_1^{-1}\widetilde{T}_m P_{(W_3A_3)^m} \\
0 & 0
\end{array}\right]V^* \label{canonical form 1}\\
&=& U\left[\begin{array}{cc}
(W_1A_1W_1)^{-1} &  W_1^{-1}(W_1A_1)^{-(m+1)}\widetilde{T}_m P_{(W_3A_3)^m} \\
0 & 0
\end{array}\right]V^*, \label{canonical form 2}
\end{eqnarray}
where
$\widetilde{T}_m=\sum\limits_{j=0}^{m-1} (W_1A_1)^j (W_1A_2+W_2 A_3) (W_3A_3)^{m-1-j}$.
\end{theorem}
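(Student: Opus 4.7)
The plan is to start from the representation
\[
A^{\core_m,W}=A[(WA)^{\odagger}]^{m+2}(WA)^m P_{(WA)^m}
\]
given in Proposition \ref{proposition 1}(b) and substitute the block forms of each of the four factors coming from the weighted core-EP decomposition (\ref{weighted CEP decomposition}). Everything will reduce to block-matrix multiplication.

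First, I would collect the ingredients. From (\ref{canonical form of AW and WA}) one obtains
\[
[(WA)^{\odagger}]^{m+2}=V\left[\begin{array}{cc} (W_1A_1)^{-(m+2)} & 0 \\ 0 & 0 \end{array}\right]V^*,
\]
and from Remark \ref{remark AW and WA},
\[
(WA)^m=V\left[\begin{array}{cc} (W_1A_1)^m & \widetilde{T}_m \\ 0 & (W_3A_3)^m \end{array}\right]V^*.
\]
Applying (\ref{projector}) to the core-EP decomposition of $(WA)^m$ given above (which is genuinely a core-EP decomposition since $W_1A_1$ is nonsingular and $(W_3A_3)^m$ is nilpotent), one gets
\[
P_{(WA)^m}=V\left[\begin{array}{cc} I_t & 0 \\ 0 & P_{(W_3A_3)^m} \end{array}\right]V^*.
\]

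Next, I would carry out the three successive block multiplications. Multiplying the first two factors, the zero second block-row of $[(WA)^{\odagger}]^{m+2}$ kills the bottom blocks, leaving
\[
[(WA)^{\odagger}]^{m+2}(WA)^m=V\left[\begin{array}{cc} (W_1A_1)^{-2} & (W_1A_1)^{-(m+2)}\widetilde{T}_m \\ 0 & 0 \end{array}\right]V^*.
\]
Right-multiplying by $P_{(WA)^m}$ turns the upper-right block into $(W_1A_1)^{-(m+2)}\widetilde{T}_m P_{(W_3A_3)^m}$ and leaves the upper-left block unchanged. Finally, left-multiplying by $A=U\!\left[\begin{smallmatrix} A_1 & A_2 \\ 0 & A_3 \end{smallmatrix}\right]V^*$ and using $V^*V=I_n$ yields
\[
A^{\core_m,W}=U\left[\begin{array}{cc} A_1(W_1A_1)^{-2} & A_1(W_1A_1)^{-(m+2)}\widetilde{T}_m P_{(W_3A_3)^m} \\ 0 & 0 \end{array}\right]V^*.
\]
The upper-left block simplifies as $A_1(W_1A_1)^{-2}=A_1 A_1^{-1}W_1^{-1}(W_1A_1)^{-1}=(W_1A_1W_1)^{-1}$, and analogously $A_1(W_1A_1)^{-(m+2)}=W_1^{-1}(W_1A_1)^{-(m+1)}$, which produces formula (\ref{canonical form 2}).

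To obtain (\ref{canonical form 1}), I would verify the elementary identity
\[
W_1^{-1}(W_1A_1)^{-(m+1)}=W_1^{-1}(A_1^{-1}W_1^{-1})^{m+1}=(W_1^{-1}A_1^{-1})^{m+1}W_1^{-1}=(A_1W_1)^{-(m+1)}W_1^{-1},
\]
which just uses $(XY)^{-1}=Y^{-1}X^{-1}$ and associativity. This shows that the upper-right blocks in (\ref{canonical form 1}) and (\ref{canonical form 2}) coincide. There is no genuine obstacle here; the work is purely bookkeeping of block products, so the only care needed is to use the correct outer unitary factors ($U$ versus $V$) when expanding each matrix and to apply (\ref{projector}) to $(WA)^m$ (not to $WA$) to get the lower-right projector $P_{(W_3A_3)^m}$.
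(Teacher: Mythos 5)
Your proposal is correct and follows essentially the same route as the paper: both start from the representation $A^{\core_m,W}=A[(WA)^{\odagger}]^{m+2}(WA)^mP_{(WA)^m}$ of Proposition \ref{proposition 1}(b), substitute the block forms from Remark \ref{remark AW and WA}, \eqref{canonical form of AW and WA} and \eqref{projector}, and reduce everything to block multiplication plus the identity $A_1(W_1A_1)^{-(m+2)}=W_1^{-1}(W_1A_1)^{-(m+1)}=(A_1W_1)^{-(m+1)}W_1^{-1}$. Your explicit care with the unitary factors ($V$ rather than $U$ for powers of $WA$) is a worthwhile correction of a typo in the paper's Remark \ref{remark AW and WA}.
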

\begin{proof}
From Proposition \ref{proposition 1} (b), we have that $A^{\core_m,W}=A[(WA)^{\odagger}]^{m+2}(WA)^mP_{(WA)^m}$. So, from Remark \ref{remark AW and WA}, \eqref{canonical form of AW and WA} and \eqref{projector} we obtain
\begin{eqnarray*}
A^{\core_m,W} &=& U\begin{bmatrix}
 A_1 & A_2 \\
  0 & A_3
  \end{bmatrix} \begin{bmatrix}
 (W_1A_1)^{-(m+2)} & 0 \\
  0 & 0
  \end{bmatrix}\begin{bmatrix}
 (W_1A_1)^m & \widetilde{T}_m \\
  0 & (W_3A_3)^m
  \end{bmatrix}\left[\begin{array}{cc}
I_t &  0 \\
0 & P_{(W_3A_3)^m}
\end{array}\right]V^* \\
&=& U\begin{bmatrix}
 A_1(W_1A_1)^{-2} & W_1^{-1}(W_1A_1)^{-(m+1)}\widetilde{T}_m P_{(W_3A_3)^m}\\
  0 & 0
  \end{bmatrix}V^*,\\
\end{eqnarray*}
which lead to \eqref{canonical form 1} and \eqref{canonical form 2} by using the identity $W_1(A_1W_1)^{m+1}=(W_1A_1)^{m+1}W_1$.
\end{proof}

\begin{remark} When $m=\ind(WA)$ it is clear that $P_{(W_3A_3)^m}=0$. Thus, from \eqref{canonical weighted CEP} and Theorem \ref{canonical form weighted m-weak core} we deduce that $A^{\core_m,W}=A^{\odagger,W}$.
\end{remark}

In the following example we show that when $1<m<k$ with $m\neq Ind(WA)$, this new inverse is different from other known ones.

\begin{example}
Consider the matrices
\[
A=\left[\begin{array}{cccc}
1 &  0 & 0 & 0 \\
0 &  1 & 0 & 0 \\
0 &  0 & 0 & 1
\end{array}\right] \quad \text{and}  \quad W=\left[\begin{array}{ccc}
0 &  0 & 1 \\
0 &  1 & 1 \\
1 &  1 & 1 \\
0 &  0 & 0
\end{array}\right] .\]
As $\ind(AW)=2$ and $\ind(WA)=3$, we have $k=\max\{\ind(AW), \ind(WA)\}=3$. Therefore, we must consider $m=2$. Thus, the $W$-weighted Drazin inverse, the $W$-weighted core-EP inverse, and the $W$-weighted $m$-weak core inverse are given by
\[
A^{d,W} =
\left[\begin{array}{cccc}
0 &  0 & 0 & 0 \\
0 &  1 & 0 & 1 \\
0 &  0 & 0 & 0
\end{array}\right], \quad
A^{\odagger,W}=
\left[\begin{array}{cccc}
0 &  0 & 0 & 0 \\
0 &  \frac{1}{2} & \frac{1}{2} & 0 \\
0 &  0 & 0 & 0
\end{array}\right], \quad A^{\core_2,W}=
\left[\begin{array}{cccc}
0 &  0 & 0 & 0 \\
0 &  1 & 0 & 0 \\
0 &  0 & 0 & 0
\end{array}\right]. \]

\end{example}

\begin{lemma} \label{lemma canonical form m-weak core of AW and WA}
Let $A \in \Cm$, $0\neq W\in \Cn$, $k=\max\{\ind(AW), \ind(WA)\}$, and $m\in \mathbb{N}$. If $A$ and $W$ are written  as in  (\ref{weighted CEP decomposition}),
then results that
\begin{enumerate}[(a)]
\item $(AW)^{\core_m}=U\left[\begin{array}{cc}
(A_1W_1)^{-1} & (A_1W_1)^{-(m+1)}\widetilde{S}_m P_{(A_3W_3)^m}  \\
0 & 0
\end{array}\right]U^*.$

\item $(WA)^{\core_m}=V\left[\begin{array}{cc}
(W_1A_1)^{-1} & (W_1A_1)^{-(m+1)}\widetilde{T}_m P_{(W_3A_3)^m} \\
0 & 0
\end{array}\right]V^*.$
\end{enumerate}
\end{lemma}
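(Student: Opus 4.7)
The plan is to derive both identities as special cases of Theorem~\ref{canonical form weighted m-weak core}, exploiting the fact that the $W$-weighted $m$-weak core inverse collapses to the ordinary $m$-weak core inverse when the weight is an identity matrix. In other words, I view the square matrix $AW$ (respectively $WA$) together with the identity as a weighted pair.

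For part (a), I would apply Theorem~\ref{canonical form weighted m-weak core} to the pair $(AW, I_p)$. From \eqref{weighted CEP decomposition} and \eqref{AW^l and WA^l} with $\ell=1$, together with the trivial block form $I_p = U\,\mathrm{diag}(I_t, I_{p-t})\,U^*$, a weighted core-EP decomposition of this pair is realized with primed blocks
\[
A_1' := A_1W_1, \quad A_2' := A_1W_2 + A_2W_3, \quad A_3' := A_3W_3, \quad W_1' := I_t, \quad W_2' := 0, \quad W_3' := I_{p-t}.
\]
These fulfill the hypotheses of the theorem, since $A_1W_1$ is nonsingular and $A_3W_3 = A_3'W_3' = W_3'A_3'$ is nilpotent of index $\ind(AW)$, so the relevant ``$k$'' in the theorem equals $\ind(AW)$.

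Substituting into formula \eqref{canonical form 1}, the top-left entry becomes $(W_1'A_1'W_1')^{-1} = (A_1W_1)^{-1}$, while the sum defining $\widetilde{T}_m'$ in the primed data collapses to
\[
\widetilde{T}_m' = \sum_{j=0}^{m-1}(A_1W_1)^j (A_1W_2+A_2W_3)(A_3W_3)^{m-1-j} = \widetilde{S}_m,
\]
by comparison with the explicit expression in Remark~\ref{remark AW and WA}. Together with $P_{(W_3'A_3')^m} = P_{(A_3W_3)^m}$ and $W_1'^{-1} = I_t$, this yields (a). Part (b) is the mirror argument applied to the pair $(WA, I_n)$: using the decomposition of $WA$ from \eqref{AW^l and WA^l} in place of that of $AW$, the primed blocks read $A_1' = W_1A_1$, $A_2' = W_1A_2+W_2A_3$, $A_3' = W_3A_3$ with identity weight blocks, and the sum reduces directly to $\widetilde{T}_m$.

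The main obstacle is purely bookkeeping rather than a subtle argument: one must verify that the primed blocks identified above do constitute a valid weighted core-EP decomposition in the sense of Theorem~\ref{decomposition} (nonsingularity of the leading block and nilpotency of the trailing ones with the correct indices), after which Theorem~\ref{canonical form weighted m-weak core} applies and the claimed formulas follow by direct substitution.
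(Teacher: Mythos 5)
Your argument is correct, but it takes a different route from the paper. The paper's proof is essentially a one-liner: it observes (via Remark~\ref{remark AW and WA} with $\ell=1$) that $AW$ inherits a core-EP decomposition with nonsingular block $A_1W_1$ and nilpotent block $A_3W_3$, and then directly invokes the canonical form of the (unweighted) $m$-weak core inverse of a square matrix from the external reference \cite[Theorem 4.6]{FeMa5}. You instead stay inside the paper: you specialize Theorem~\ref{canonical form weighted m-weak core} to the pair $(AW, I_p)$ (resp.\ $(WA, I_n)$), checking that the primed blocks $A_1'=A_1W_1$, $A_2'=A_1W_2+A_2W_3$, $A_3'=A_3W_3$ with identity weight blocks form a legitimate weighted core-EP decomposition in the sense of Theorem~\ref{decomposition}, and that $(AW)^{\core_m,I_p}=(AW)^{\weak_m,I_p}P_{(AW)^m}=(AW)^{\weak_m}P_{(AW)^m}=(AW)^{\core_m}$. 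Both proofs hinge on the same key observation that \eqref{AW^l and WA^l} is a core-EP decomposition of $AW$ and $WA$; there is no circularity in your version since Theorem~\ref{canonical form weighted m-weak core} precedes the lemma and does not depend on it. What your approach buys is self-containment (no appeal to the square-matrix canonical form in \cite{FeMa5}); what it costs is the routine but necessary bookkeeping of verifying the hypotheses for the identity-weighted pair and the collapse of the weighted inverse to the unweighted one, both of which you handle correctly. One minor point worth making explicit: for the pair $(AW,I_p)$ the relevant index is $k'=\ind(AW)$, which may differ from the original $k$, but this is harmless because the formulas in Theorem~\ref{canonical form weighted m-weak core} do not involve $k$ explicitly.
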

\begin{proof} We only prove part (a) since the proof of (b) is analogous.
From Remark \ref{remark AW and WA}, we know that
\begin{equation}\label{wa}
AW= V\left[\begin{array}{cc}
A_1W_1 & A_1W_2 + A_2W_3\\
0 & A_3W_2
\end{array}\right]V^*,
\end{equation}
is a core-EP decomposition of $AW$. Now, by applying \cite[Theorem 4.6]{FeMa5}, we get (a).
\end{proof}

\begin{remark} Note that the expressions obtained in the previous lemma coincide with those given in \eqref{canonical form of AW and WA} when $m\ge k$ or $m=\ind(WA)$.
\end{remark}
From Theorem \ref{canonical form weighted m-weak core} and Lemma \ref{lemma canonical form m-weak core of AW and WA}, we can derive easily more properties for the $W$-weighted $m$-weak core inverse.

\begin{theorem}\label{more properties weighted m-weak core} Let $A\in \Cm$, $0\neq W\in \Cn$ and $m\in \mathbb{N}$. Then the following statements hold:
\begin{enumerate}[{\rm (a)}]
\item $WA^{\core_m,W}=(WA)^{\core_m}$.
\item $A^{\core_m,W}=AWA^{\core_m,W}WA^{\core_m,W}$.
\item $A^{\core_m,W}=(AW)^{\core_m}A(WA)^{\core_m}$.
\item $A^{\core_m,W}=A[(WA)^{\core_m}]^2$.
\end{enumerate}
\end{theorem}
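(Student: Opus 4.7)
The plan is to lean on the explicit block forms already available: the canonical form of $A^{\core_m,W}$ in Theorem \ref{canonical form weighted m-weak core} together with the canonical forms of $(AW)^{\core_m}$ and $(WA)^{\core_m}$ in Lemma \ref{lemma canonical form m-weak core of AW and WA}. Statement (b) is already contained in Theorem \ref{theorem more properties}(d), so nothing new is needed there. The real work is (a); once (a) is in hand, (d) is a short algebraic consequence, and (c) reduces to a one-shot block matrix computation.

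For (a), I would multiply $W=V\bigl[\begin{smallmatrix}W_1 & W_2\\ 0 & W_3\end{smallmatrix}\bigr]U^*$ by the representation \eqref{canonical form 2} of $A^{\core_m,W}$. Since the second block row of $A^{\core_m,W}$ vanishes, only the first block row of $W$ contributes. The (1,1) entry becomes $W_1(W_1A_1W_1)^{-1}$, and the key step is the identity $W_1(W_1A_1W_1)^{-1}=(W_1A_1)^{-1}$, which falls out of $(W_1A_1W_1)^{-1}=W_1^{-1}(W_1A_1)^{-1}$. The (1,2) entry becomes $W_1\cdot W_1^{-1}(W_1A_1)^{-(m+1)}\widetilde T_mP_{(W_3A_3)^m}=(W_1A_1)^{-(m+1)}\widetilde T_mP_{(W_3A_3)^m}$. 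The resulting block matrix is precisely the canonical form of $(WA)^{\core_m}$ given by Lemma \ref{lemma canonical form m-weak core of AW and WA}(b), which proves (a).

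For (d), I would chain (a), (b), and Theorem \ref{theorem more properties}(e):
\[
A^{\core_m,W}\;=\;(AWA^{\core_m,W})(WA^{\core_m,W})\;=\;\bigl(A(WA)^{\core_m}\bigr)(WA)^{\core_m}\;=\;A\bigl[(WA)^{\core_m}\bigr]^2,
\]
where the first equality is (b), the second uses Theorem \ref{theorem more properties}(e) on the first factor and (a) on the second.

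For (c), I would compute $(AW)^{\core_m}A(WA)^{\core_m}$ directly from Lemma \ref{lemma canonical form m-weak core of AW and WA}. Forming $(AW)^{\core_m}A$ first and then multiplying by $(WA)^{\core_m}$, the bottom row is zero throughout, so only the top row of the product matters. The (1,1) block simplifies via $(A_1W_1)^{-1}A_1=W_1^{-1}$ (from $W_1^{-1}=(A_1W_1)^{-1}A_1W_1W_1^{-1}$) to give $(A_1W_1)^{-1}A_1(W_1A_1)^{-1}=W_1^{-1}(W_1A_1)^{-1}=(W_1A_1W_1)^{-1}$, and the (1,2) block reduces, by the same identity, to $W_1^{-1}(W_1A_1)^{-(m+1)}\widetilde T_mP_{(W_3A_3)^m}$. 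This is exactly \eqref{canonical form 2}. The main obstacle, and the only place where careful bookkeeping is required, is the (1,2) entry of $(AW)^{\core_m}A$: it picks up a contribution from the $(1,2)$ block of $(AW)^{\core_m}$ multiplied by $A_3$, but that contribution is absorbed against the zero lower block of $(WA)^{\core_m}$ and therefore disappears from the final product. Once this cancellation is noted, the identification with $A^{\core_m,W}$ is immediate.
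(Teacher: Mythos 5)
Your proof is correct and follows exactly the route the paper intends: the paper gives no explicit argument for this theorem, merely noting that it follows from the canonical forms in Theorem \ref{canonical form weighted m-weak core} and Lemma \ref{lemma canonical form m-weak core of AW and WA}, and your block computations (including the identities $W_1(W_1A_1W_1)^{-1}=(W_1A_1)^{-1}$ and $(A_1W_1)^{-1}A_1=W_1^{-1}$, and the observation that the stray $(1,2)$-contribution is killed by the zero second row of $(WA)^{\core_m}$) supply precisely the missing details. The appeals to Theorem \ref{theorem more properties}(d) and (e) for parts (b) and (d) are also valid.
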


Recall from \eqref{properties w drazin} that the $W$-weighted Drazin inverse satisfies the interesting identities $A^{d,W}=[(AW)^d]^2A=A[(WA)^d]^2$, $A^{d,W}W=(AW)^d$, and $WA^{d,W}=(WA)^d$. By Theorem \ref{more properties weighted m-weak core} we know that some of these properties remain valid for the $W$-weighted $m$-weak core inverse .
In the following example we show that in general $A^{\core_m,W}\neq [(AW)^{\core_m}]^2A$ and $A^{\core_m,W}W\neq (AW)^{\core_m}$.

\begin{example}\label{ex1-W-mWC} Let
\[A=\left[\begin{array}{ccc}
1 &  1 & 0 \\
0 &  1 & 0 \\
0 &  0 & 1 \\
0 &  0 & 0
\end{array}\right] \quad \text{and}  \quad W=\left[\begin{array}{cccc}
1 &  0 & 1 & 0 \\
0 &  0 & 1 & 0 \\
0 &  0 & 0 & 1
\end{array}\right].\]
Note that $k=\max\{{\rm Ind}(AW), {\rm Ind}(WA)\}=\max\{3,2\}=3$. For  $m=2$, from Proposition \ref{proposition 1} (f)  we obtain
\[A^{\core_2,W}=A(WA)^3 [(WA)^7]^\dag (WA)^4[(WA)^2]^\dag=
\left[\begin{array}{ccc}
1 &  0 & 0 \\
0 &  0 & 0 \\
0 &  0 & 0 \\
0 &  0 & 0
\end{array}\right].\]
Also, we {\rm\cite[Theorem 4.7 (f)]{FeMa5}} we have
\[
(AW)^{\core_2}=(AW)^3[(AW)^{6}]^\dag (AW)^4 [(AW)^2]^\dag=
\left[\begin{array}{cccc}
1 &  0 & 0 & 0 \\
0 &  0 & 0 & 0 \\
0 &  0 & 0 & 0 \\
0 &  0 & 0 &0
\end{array}\right].\]
Thus, a straightforward computation shows that
\[A^{\core_2,W}\neq [(AW)^{\core_2}]^2A=\left[\begin{array}{ccc}
1 &  1 & 0 \\
0 &  0 & 0 \\
0 &  0 & 0 \\
0 &  0 & 0
\end{array}\right] \quad \text{and}\quad (AW)^{\core_2}\neq A^{\core_2,W}W=\left[\begin{array}{cccc}
1 &  0 & 1 & 0 \\
0 &  0 & 0 & 0 \\
0 &  0 & 0 & 0 \\
0 &  0 & 0 &0
\end{array}\right].\]
\end{example}

\section{Applications}

The special solution of matrix equations has raised much interest among researchers due
to the wide applications such as robust control, neural network, singular system control,
model reduction and image processing. 
To find appropriate approximations to inconsistent system of linear equations $Ax=b$, one typical approach is to asks for, so called,
generalized solutions, defined as solutions to $GAx=Gb$ with respect to an appropriate matrix $G$ \cite{Mas}.
This approach has been exploited extensively.
One particular choice is $G=A^*$, which leads to widely used least-squares solutions obtained as solutions to the normal equation $A^* Ax=A^* b$.
Another important choice is $G=A^k$ and $k=\ind(A)$, which leads to the so called Drazin normal equation $A^{k+1}x=A^kb$ and usage of the Drazin inverse solution $A^Db$.

Applying the $W$-weighted $m$-weak core inverse, we obtain solvability of certain systems of linear matrix equations.

\begin{theorem}\label{te11-W-mWC} Let $A \in \Cm$, $0\neq W\in \Cn$, $k=\max\{\ind(AW), \ind(WA)\}$, $m\in \mathbb{N}$, and $b\in\C^n$. The general solution to the equation
\begin{equation}[(WA)^k]^*(WA)^{m+1}Wx=[(WA)^k]^*(WA)^{2m}[(WA)^m]^\dag b,\label{jed1-W-mWC}\end{equation}
is given as
\begin{equation}x=A^{\core_m,W} b+(I_p-A^{\weak_m,W}WAW)y,\label{jed2-W-mWC}\end{equation} for arbitrary $y\in\C^p$.
\end{theorem}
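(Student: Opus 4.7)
The plan is to view equation \eqref{jed1-W-mWC} as a linear system $Mx=c$ with $M:=[(WA)^k]^*(WA)^{m+1}W$ and $c:=[(WA)^k]^*(WA)^{2m}[(WA)^m]^\dag b$, and to establish two facts: first, that $x_0:=A^{\core_m,W}b$ is a particular solution; second, that $\Nu(M)=\Ra(I_p-A^{\weak_m,W}WAW)$. Together they yield the affine description \eqref{jed2-W-mWC}.

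For the particular solution, I would first collapse the rectangular identity to a square-matrix one. Lemma~\ref{lemma more properties of weighted WG}(c) asserts $WA^{\weak_m,W}=(WA)^{\weak_m}$, and the definition $A^{\core_m,W}=A^{\weak_m,W}P_{(WA)^m}$ then gives $WA^{\core_m,W}=(WA)^{\core_m}$. Writing $(WA)^{m+1}W=(WA)^m\cdot WAW$ yields
\[
MA^{\core_m,W}b=[(WA)^k]^*(WA)^m\,(WA)(WA)^{\core_m}b.
\]
Apply \eqref{def m-weak core} to the square matrix $WA$: $(WA)(WA)^{\core_m}=((WA)^{\odagger})^m(WA)^m P_{(WA)^m}$. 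The key ingredient is the core-EP identity $(WA)^m((WA)^{\odagger})^m=P_{(WA)^k}$ valid for every $m\ge1$, which follows by a short induction from $(WA)(WA)^{\odagger}=P_{(WA)^k}$ and the inclusion $\Ra(((WA)^{\odagger})^{m-1})\subseteq\Ra((WA)^{\odagger})=\Ra((WA)^k)$, giving $P_{(WA)^k}((WA)^{\odagger})^{m-1}=((WA)^{\odagger})^{m-1}$. Combined with the Hermitian identity $[(WA)^k]^*P_{(WA)^k}=[(WA)^k]^*$, the left-hand side collapses to $[(WA)^k]^*(WA)^m P_{(WA)^m}b$; this coincides with $c$ upon rewriting $P_{(WA)^m}=(WA)^m[(WA)^m]^\dag$.

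For the null-space step, I would exploit the identity $\Nu(A^{\weak_m,W})=\Nu([(WA)^k]^*(WA)^m)$ already invoked in the proof of Theorem~\ref{theorem more properties}(c). Applying Lemma~\ref{lemma null space} with $C=WAW$ produces
\[
\Nu(A^{\weak_m,W}WAW)=\Nu([(WA)^k]^*(WA)^{m+1}W)=\Nu(M).
\]
Lemma~\ref{lemma more properties of weighted WG}(a) reads $A^{\weak_m,W}WAWA^{\weak_m,W}=A^{\weak_m,W}$; right-multiplying by $WAW$ shows $(A^{\weak_m,W}WAW)^2=A^{\weak_m,W}WAW$, so $I_p-A^{\weak_m,W}WAW$ is an idempotent whose range is $\Nu(A^{\weak_m,W}WAW)=\Nu(M)$. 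Hence every $z\in\Nu(M)$ can be written as $(I_p-A^{\weak_m,W}WAW)y$, simply by taking $y=z$.

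Combining the two steps, any solution of $Mx=c$ decomposes as $A^{\core_m,W}b+z$ with $z\in\Nu(M)=\Ra(I_p-A^{\weak_m,W}WAW)$, which is exactly \eqref{jed2-W-mWC}, and conversely every vector of that form solves \eqref{jed1-W-mWC}. The delicate point is the particular-solution verification: once one realises that $WA^{\core_m,W}=(WA)^{\core_m}$ reduces the question to a square-matrix identity and then invokes the clean core-EP cancellations $(WA)^m((WA)^{\odagger})^m=P_{(WA)^k}$ and $[(WA)^k]^*P_{(WA)^k}=[(WA)^k]^*$, everything collapses, but this structural recognition is not visible from the raw form of \eqref{jed1-W-mWC}.
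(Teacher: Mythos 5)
Your proof is correct, and it reaches the conclusion by a somewhat different route than the paper. The architecture is the same in both cases --- exhibit $A^{\core_m,W}b$ as a particular solution and describe the homogeneous solutions --- but the supporting identities differ. For the particular solution, the paper works from the Moore--Penrose representation $A^{\core_m,W}=A(WA)^k[(WA)^{k+m+2}]^\dag (WA)^{2m}[(WA)^m]^\dag$ of Proposition \ref{proposition 1}(f) and telescopes $(WA)^{m+1}WA(WA)^k[(WA)^{k+m+2}]^\dag$ into $P_{(WA)^{k+m+2}}=P_{(WA)^k}$; you instead reduce to the square matrix $WA$ via $WA^{\core_m,W}=(WA)^{\core_m}$ and invoke the defining equation \eqref{def m-weak core} together with the cancellation $(WA)^m((WA)^{\odagger})^m=P_{(WA)^k}$ --- both computations are valid and of comparable length (your induction for the cancellation is sound, noting only that $P_{(WA)^{\ind(WA)}}=P_{(WA)^k}$). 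The more noticeable divergence is in the converse: the paper verifies directly, by a chain of equalities, that any solution $x$ satisfies $A^{\core_m,W}b=A^{\weak_m,W}WAWx$, whereas you identify the null space of the coefficient matrix exactly, $\Nu([(WA)^k]^*(WA)^{m+1}W)=\Nu(A^{\weak_m,W}WAW)=\Ra(I_p-A^{\weak_m,W}WAW)$, using Lemma \ref{lemma null space} and the idempotency of $A^{\weak_m,W}WAW$ coming from Lemma \ref{lemma more properties of weighted WG}(a). Your version buys a cleaner statement --- it makes explicit that the affine set in \eqref{jed2-W-mWC} is precisely the solution set, with both inclusions following at once from the idempotent structure --- at the price of leaning on the identity $\Nu(A^{\weak_m,W})=\Nu([(WA)^k]^*(WA)^m)$ from \cite{MoStKa}, which the paper itself already invokes in the proof of Theorem \ref{theorem more properties}(c), so nothing extraneous is assumed.
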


\begin{proof} Assume that $x$ is expressed by (\ref{jed2-W-mWC}). Using Proposition \ref{proposition 1} (f), it follows the expression $A^{\core_m,W}=A(WA)^k[(WA)^{k+m+2}]^\dag (WA)^{2m}[(WA)^m]^\dag$. Thus, as $P_{(WA)^k}=P_{(WA)^\ell}$ for each integer $\ell\ge k$, we obtain
\begin{eqnarray}\label{jed3-W-mWC}
[(WA)^k]^*(WA)^{m+1}WA^{\core_m,W}&=&[(WA)^k]^*P_{(WA)^{k+m+2}}(WA)^{2m}[(WA)^m]^\dag\nonumber\\
&=&[(WA)^k]^*P_{(WA)^k}(WA)^{2m}[(WA)^m]^\dag\nonumber\\
&=&[(WA)^{k}]^*(WA)^{2m}[(WA)^m]^\dag.
\end{eqnarray}
Similarly, as $A^{\weak_m,W}=A(WA)^k[(WA)^{k+m+2}]^\dag (WA)^m$ by \cite[Lemma 2.1]{MoStKa}, we deduce
\begin{eqnarray}\label{jed4-W-mWC}
[(WA)^k]^*(WA)^{m+1}WA^{\weak_m,W}WAW&=&[(WA)^k]^*P_{(WA)^{k+m+2}}(WA)^{m+1}W\nonumber\\
&=& ([(WA)^k]^*P_{(WA)^k})(WA)^{m+1}W\nonumber\\
&=&[(WA)^{k}]^*(WA)^{m+1}W.
\end{eqnarray}
In consequence, the equalities \eqref{jed2-W-mWC}, \eqref{jed3-W-mWC} and \eqref{jed4-W-mWC} imply that $x$ is a solution to
(\ref{jed1-W-mWC}). In fact,
\begin{eqnarray*}
& &[(WA)^k]^*(WA)^{m+1}Wx \\
&=&[(WA)^k]^*(WA)^{m+1}WA^{\core_m,W} b
+[(WA)^k]^*(WA)^{m+1}W(I_p-A^{\weak_m,W}WAW)y\\
&=&[(WA)^k]^*(WA)^{2m}[(WA)^m]^\dag b.
\end{eqnarray*}
On the other hand, let $x$ be a solution to the equation (\ref{jed1-W-mWC}). Since
\begin{eqnarray*}
A^{\core_m,W} b&=&A(WA)^k[(WA)^{k+m+2}]^\dag (WA)^{2m}[(WA)^m]^\dag b\\
&=&A[(WA)^d]^{k+m+2}P_{(WA)^{k+m+2}} (WA)^{2m}[(WA)^m]^\dag b\\
&=& A[(WA)^d]^{k+m+2}P_{(WA)^k} (WA)^{2m}[(WA)^m]^\dag b\\
&=& A[(WA)^d]^{k+m+2}[((WA)^k)^\dag]^*[(WA)^k]^* (WA)^{2m}[(WA)^m]^\dag b\\
&=& A[(WA)^d]^{k+m+2}[((WA)^k)^\dag]^* [(WA)^k]^*(WA)^{m+1}Wx \\
&=& A[(WA)^d]^{k+m+2}P_{(WA)^{k+m+2}}(WA)^{m+1}Wx \\
&=&A(WA)^k[(WA)^{k+m+2}]^\dag (WA)^{m+1}Wx\\
&=&A^{\weak_m,W} WAWx,
\end{eqnarray*}
then $x=A^{\core_m,W} b+(I_p-A^{\weak_m,W}WAW)x$, i.e., the form of $x$ is (\ref{jed2-W-mWC}).
\end{proof}

If we suppose that $b\in\Ra((WA)^m)$ in Theorem \ref{te11-W-mWC}, then, by $(WA)^{m}[(WA)^m]^\dag b=b$, the general solution
to the equation
\begin{equation}[(WA)^k]^*(WA)^{m+1}Wx=[(WA)^k]^*(WA)^{m}b\label{jed2'-W-mWC}\end{equation}  is given as
$x=A(WA)^k[(WA)^{k+m+2}]^\dag (WA)^{m} b+(I_p-A^{\weak_m,W}WAW)y$, for arbitrary $y\in\C^p$.
Remark that, the equation (\ref{jed2'-W-mWC}) has a form $GWAWx=Gb$, for $G=[(WA)^k]^*(WA)^{m}$.

For $p=n$ and $W=I_n$ in Theorem \ref{te11-W-mWC}, we get the next consequence for the $m$-weak core inverse.

\begin{corollary} Let $A \in \Cnn$, $\ind(A)=k$, $m\in \mathbb{N}$, and $b\in\C^n$. The general solution to the equation
\begin{equation}(A^k)^*A^{m+1}x=(A^k)^*A^{2m}(A^m)^\dag b\label{jed1'-W-mWC}\end{equation} is given as
$$x=A^{\core_m} b+(I_n-A^{\weak_m}A)y,$$ for arbitrary $y\in\C^p$.
\end{corollary}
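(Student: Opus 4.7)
The plan is to obtain this corollary as an immediate specialization of Theorem \ref{te11-W-mWC} by setting $p = n$ and $W = I_n$. Under this substitution, $WA = AW = A$, so $k = \max\{\ind(AW), \ind(WA)\} = \ind(A)$, and the weighted generalized inverses collapse to their unweighted counterparts: by the discussion following \eqref{def weighted m-weak group} one has $A^{\weak_m, I_n} = A^{\weak_m}$, and by Remark \ref{remark 1} and the definition \eqref{def weighted m-weak core} one has $A^{\core_m, I_n} = A^{\weak_m, I_n} P_{A^m} = A^{\weak_m} P_{A^m} = A^{\core_m}$.

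With this dictionary in place, I would simply rewrite both sides of equation \eqref{jed1-W-mWC}. The left-hand side $[(WA)^k]^* (WA)^{m+1} W x$ becomes $(A^k)^* A^{m+1} x$, and the right-hand side $[(WA)^k]^* (WA)^{2m} [(WA)^m]^\dag b$ becomes $(A^k)^* A^{2m} (A^m)^\dag b$. These coincide with \eqref{jed1'-W-mWC}. Likewise, the general solution \eqref{jed2-W-mWC} specializes to
\[
x = A^{\core_m} b + (I_n - A^{\weak_m} A) y,
\]
for arbitrary $y \in \C^n$ (which equals $\C^p$ since $p = n$), giving exactly the claimed form.

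There is essentially no obstacle here, since the corollary is a direct specialization and all the work has already been carried out in the proof of Theorem \ref{te11-W-mWC}. The only minor point worth verifying is the identification of the ambient space for $y$, which follows from $p = n$. Thus the entire proof of the corollary reduces to one line: apply Theorem \ref{te11-W-mWC} with $p = n$ and $W = I_n$.
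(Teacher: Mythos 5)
Your proposal is correct and matches the paper exactly: the corollary is obtained there precisely by setting $p=n$ and $W=I_n$ in Theorem \ref{te11-W-mWC}, with no further argument. Your observation that the ambient space for $y$ should then be $\C^n$ is also right (the statement's ``$y\in\C^p$'' is a leftover from the weighted version).
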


We present conditions under which a solution to (\ref{jed1-W-mWC}) is unique.

\begin{theorem}\label{te11'-W-mWC} Let $A \in \Cm$, $0\neq W\in \Cn$, $k=\max\{\ind(AW), \ind(WA)\}$, $m\in \mathbb{N}$, and $b\in\C^n$. Then $A^{\core_m,W}b$ is the unique solution to {\rm(\ref{jed1-W-mWC})} in $\Ra((AW)^k)$.
\end{theorem}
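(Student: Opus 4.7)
The plan is to combine the general-solution formula from Theorem \ref{te11-W-mWC} with the range identity $\Ra(A^{\core_m,W}) = \Ra((AW)^k)$ from Theorem \ref{theorem more properties}(c). The first step is trivial: the candidate solution $A^{\core_m,W}b$ automatically lies in $\Ra((AW)^k)$ since $\Ra(A^{\core_m,W}) = \Ra((AW)^k)$, and it is a solution by Theorem \ref{te11-W-mWC}.

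For uniqueness, the key observation I would establish is that the operator $A^{\weak_m,W}WAW$ restricts to the identity on the subspace $\Ra((AW)^k)$. To prove this, I would start from Lemma \ref{lemma more properties of weighted WG}(a), namely $A^{\weak_m,W}WAWA^{\weak_m,W} = A^{\weak_m,W}$, multiply on the right by $P_{(WA)^m}$, and invoke the defining equality $A^{\weak_m,W}P_{(WA)^m} = A^{\core_m,W}$ to obtain
\begin{equation*}
A^{\weak_m,W}WAWA^{\core_m,W} = A^{\core_m,W}.
\end{equation*}
Since $\Ra((AW)^k) = \Ra(A^{\core_m,W})$, any $x\in\Ra((AW)^k)$ can be written as $x = A^{\core_m,W}u$, and then $A^{\weak_m,W}WAWx = A^{\core_m,W}u = x$.

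With this identity in hand, let $x\in\Ra((AW)^k)$ be any solution of \eqref{jed1-W-mWC}. On one hand, the chain of equalities already carried out in the second half of the proof of Theorem \ref{te11-W-mWC} shows that for any solution $x$ of \eqref{jed1-W-mWC} one has $A^{\weak_m,W}WAWx = A^{\core_m,W}b$. On the other hand, by the restriction identity just proved, $A^{\weak_m,W}WAWx = x$. Combining these two equalities gives $x = A^{\core_m,W}b$, which is the desired uniqueness.

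The only real obstacle is isolating the correct ``partial identity'' statement: one must recognize that $A^{\weak_m,W}WAW$ is not a full projector onto $\Ra((AW)^k)$ in general (its range is larger unless $m\ge k$), but its restriction to $\Ra((AW)^k)=\Ra(A^{\core_m,W})$ is the identity. Once that is noticed, the argument is a two-line combination of results already established in the paper, and no new calculation is needed.
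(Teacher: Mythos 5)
Your proof is correct; the existence half coincides with the paper's, but your uniqueness argument takes a genuinely different (though closely related) route. The paper considers two solutions $x_1,x_2\in\Ra((AW)^k)$, notes that $x_1-x_2\in\Nu([(WA)^k]^*(WA)^{m+1}W)\cap\Ra((AW)^k)$, and then uses the null-space description of $A^{\weak_m,W}$ from \cite[Lemma 2.2]{MoStKa} together with the idempotency of $E:=A^{\weak_m,W}WAW$ to place this difference in $\Nu(E)\cap\Ra(E)=\{0\}$. You instead show directly that $E$ restricts to the identity on $\Ra((AW)^k)$, via the identity $A^{\weak_m,W}WAWA^{\core_m,W}=A^{\core_m,W}$ obtained by right-multiplying Lemma \ref{lemma more properties of weighted WG}(a) by $P_{(WA)^m}$, and you combine this with the equality $A^{\core_m,W}b=A^{\weak_m,W}WAWx$, which the computation in the second half of the proof of Theorem \ref{te11-W-mWC} delivers for every solution $x$ of \eqref{jed1-W-mWC}. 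Both arguments ultimately rest on the same idempotent $E$ (since $\Ra(E)=\Ra((AW)^k)$, saying that $E$ acts as the identity on $\Ra((AW)^k)$ is equivalent to $\Nu(E)\cap\Ra(E)=\{0\}$), but yours is the ``direct'' version---any solution in the prescribed range is pinned down explicitly as $A^{\core_m,W}b$---whereas the paper's is the ``difference of two solutions'' version. Your route avoids invoking the external null-space formula $\Nu(A^{\weak_m,W})=\Nu([(WA)^k]^*(WA)^m)$, at the modest cost of re-using the computation chain from Theorem \ref{te11-W-mWC}; both are equally rigorous.
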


\begin{proof} By Theorem \ref{theorem more properties} (c) we know that $\Ra(A^{\core_m,W})=\Ra((AW)^k)$. Thus, according to Theorem \ref{te11-W-mWC}, the equation (\ref{jed1-W-mWC}) has a solution $x=A^{\core_m,W}b\in\Ra((AW)^k)$.
In order to prove the uniqueness of the solution, let us consider
two solutions $x_1,x_2\in\Ra((AW)^k)$ to (\ref{jed1-W-mWC}). Therefore,
as $[(WA)^k]^*(WA)^{m+1}Wx_1=[(WA)^k]^*(WA)^{m+1}Wx_2$, from \cite[Lemma 2.2]{MoStKa} we deduce
\begin{eqnarray*}
x_1-x_2 &\in &\Nu([(WA)^k]^*(WA)^{m+1}W)\cap \Ra((AW)^k)\\
&\subseteq & \Nu(A^{\weak_m,W}WAW)\cap \Ra(A^{\weak_m,W}WAW)\\
&= & \{0\}.
\end{eqnarray*}
Hence, $A^{\core_m,W}b=x=x_1$ is
unique solution to (\ref{jed1-W-mWC}) in $\Ra((AW)^k)$.
\end{proof}

Consequently, we get when the equation (\ref{jed1'-W-mWC}) has unique determined solution.

\begin{corollary} Let $A \in \Cnn$, $\ind(A)=k$, $m\in \mathbb{N}$, and $b\in\C^n$. Then $A^{\core_m}b$ is unique solution in $\Ra(A^k)$ to {\rm(\ref{jed1'-W-mWC})}.
\end{corollary}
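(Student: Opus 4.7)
The plan is simply to specialize Theorem~\ref{te11'-W-mWC} to the square, unweighted case by setting $p=n$ and $W=I_n$. Under this substitution, each of the weighted objects collapses to its classical counterpart: $k=\max\{\ind(AI_n),\ind(I_nA)\}=\ind(A)$, $(AW)^k=(WA)^k=A^k$, $A^{\weak_m,W}=A^{\weak_m}$, and, by the observation already made in the paper (see Remark~\ref{remark 1}(i) and the surrounding text), $A^{\core_m,W}=A^{\core_m}$.

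First I would translate the linear equation (\ref{jed1-W-mWC}) into its $W=I_n$ form, which is exactly (\ref{jed1'-W-mWC}):
\[
(A^k)^*A^{m+1}x=(A^k)^*A^{2m}(A^m)^\dag b.
\]
Next, I would rewrite the range condition $\Ra((AW)^k)$ in the statement of Theorem~\ref{te11'-W-mWC} as $\Ra(A^k)$. With these identifications, Theorem~\ref{te11'-W-mWC} immediately asserts that $A^{\core_m}b$ is a solution to (\ref{jed1'-W-mWC}) belonging to $\Ra(A^k)$, and that it is the only solution in this subspace.

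There is no substantive obstacle: the argument is entirely a matter of recognizing that the reduction $W=I_n$ turns every hypothesis and conclusion of Theorem~\ref{te11'-W-mWC} into the corresponding statement for the $m$-weak core inverse of a square matrix. In particular, one does not need to re-derive the two ingredients used in the weighted proof (the identity $\Ra(A^{\core_m,W})=\Ra((AW)^k)$ from Theorem~\ref{theorem more properties}(c) and the kernel/range splitting from \cite[Lemma 2.2]{MoStKa}), since each is already packaged inside Theorem~\ref{te11'-W-mWC}. Hence the proof reduces to a one-line invocation: \emph{apply Theorem~\ref{te11'-W-mWC} with $p=n$ and $W=I_n$.}
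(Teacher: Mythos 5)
Your proposal is correct and matches the paper's intent exactly: the corollary is stated as an immediate consequence of Theorem \ref{te11'-W-mWC} obtained by setting $p=n$ and $W=I_n$, under which $A^{\core_m,W}$ reduces to $A^{\core_m}$ and equation (\ref{jed1-W-mWC}) becomes (\ref{jed1'-W-mWC}). Nothing further is needed.
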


Similarly as Theorem \ref{te11-W-mWC} and Theorem \ref{te11'-W-mWC}, we can prove the solvability of the next equation.

\begin{theorem}\label{te11''-W-mWC} Let $A \in \Cm$, $0\neq W\in \Cn$, $k=\max\{\ind(AW), \ind(WA)\}$, $m\in \mathbb{N}$, and $b\in\C^n$. The general solution to the equation
\begin{equation}[(WA)^k]^*(WA)^{2m}[(WA)^m]^\dag WAWx=[(WA)^k]^*(WA)^{2m}[(WA)^m]^\dag b,\label{jed1''-W-mWC}\end{equation}
is given as
$$x=A^{\core_m,W} b+(I_p-A^{\core_m,W}WAW)y,$$ for arbitrary $y\in\C^p$. Furthermore, $A^{\core_m,W}b$ is the unique solution to {\rm(\ref{jed1''-W-mWC})} in $\Ra((AW)^k)$.
\end{theorem}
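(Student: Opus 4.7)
The plan is to emulate the two-step strategy used in Theorems~\ref{te11-W-mWC} and~\ref{te11'-W-mWC}, after a preparatory simplification of the coefficient matrix
\[M:=[(WA)^k]^*(WA)^{2m}[(WA)^m]^\dag.\]
Since $(WA)^{2m}[(WA)^m]^\dag=(WA)^m(WA)^m[(WA)^m]^\dag=(WA)^m P_{(WA)^m}$, one has $M=[(WA)^k]^*(WA)^m P_{(WA)^m}$, and comparing with Theorem~\ref{theorem more properties}(c) yields the pivotal identity $\Nu(M)=\Nu(A^{\core_m,W})$. I would also record, via Proposition~\ref{proposition 2}(b), that $WAWA^{\core_m,W}$ is the projector onto $\Ra((WA)^k)$ along $\Nu(M)$, so that $(I_n-WAWA^{\core_m,W})z\in\Nu(M)$ for every $z$; multiplying on the left by $M$ gives the central relation $MWAWA^{\core_m,W}=M$.

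For the sufficient direction, substituting $x=A^{\core_m,W}b+(I_p-A^{\core_m,W}WAW)y$ into the left-hand side of~(\ref{jed1''-W-mWC}) and applying $MWAWA^{\core_m,W}=M$ twice (once to $b$, once to $WAWy$), one immediately gets $MWAWx=Mb+(MWAW-MWAW)y=Mb$. For the converse, an arbitrary solution $x$ satisfies $M(WAWx-b)=0$, so $WAWx-b\in\Nu(M)=\Nu(A^{\core_m,W})$ and therefore $A^{\core_m,W}WAWx=A^{\core_m,W}b$; the tautological decomposition $x=A^{\core_m,W}WAWx+(I_p-A^{\core_m,W}WAW)x$ then puts $x$ in the desired form with $y:=x$.

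For the final uniqueness assertion, I would invoke Proposition~\ref{proposition 2}(c): $A^{\core_m,W}WAW=P_{\Ra((AW)^k),\,\Nu(MWAW)}$ is a projector onto $\Ra((AW)^k)$, hence acts as the identity on that subspace. So any solution $x\in\Ra((AW)^k)$ satisfies $x=A^{\core_m,W}WAWx=A^{\core_m,W}b$. I do not expect a serious obstacle; the only subtle move is noticing that the coefficient matrix on both sides of~(\ref{jed1''-W-mWC}) agrees, after the $P_{(WA)^m}$ rewriting, with the matrix whose null space characterizes $A^{\core_m,W}$, and this is precisely what allows the projector identities from Proposition~\ref{proposition 2}(b) and~(c) to do all the work.
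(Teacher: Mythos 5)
Your proof is correct and complete. The paper itself does not write out a proof of this theorem; it only remarks that the argument is ``similar'' to Theorems \ref{te11-W-mWC} and \ref{te11'-W-mWC}, whose proofs proceed by direct manipulation of the representation $A^{\core_m,W}=A(WA)^k[(WA)^{k+m+2}]^\dag(WA)^{2m}[(WA)^m]^\dag$ together with Drazin--inverse identities, and whose uniqueness step rests on $\Nu(A^{\weak_m,W}WAW)\cap\Ra(A^{\weak_m,W}WAW)=\{0\}$. You instead rewrite the coefficient matrix as $M=[(WA)^k]^*(WA)^mP_{(WA)^m}$, recognize from Theorem \ref{theorem more properties}(c) that $\Nu(M)=\Nu(A^{\core_m,W})$, and let the projector identities of Proposition \ref{proposition 2}(b) and (c) deliver the single relation $MWAWA^{\core_m,W}=M$ that drives both the sufficiency and the necessity; uniqueness then follows because $A^{\core_m,W}WAW$ fixes $\Ra((AW)^k)$ pointwise. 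This is a genuinely cleaner route: it avoids all Moore--Penrose computations, and it makes transparent why this particular equation --- unlike \eqref{jed1-W-mWC} --- is adapted to $A^{\core_m,W}$ rather than to $A^{\weak_m,W}$ (its coefficient matrix has exactly the null space of $A^{\core_m,W}$), which in turn explains why the residual projector in the general solution is $I_p-A^{\core_m,W}WAW$ here rather than $I_p-A^{\weak_m,W}WAW$. The only point worth making explicit for completeness is that $A^{\core_m,W}b\in\Ra((AW)^k)$ (immediate from $\Ra(A^{\core_m,W})=\Ra((AW)^k)$ in Theorem \ref{theorem more properties}(c)), so that the ``unique solution in $\Ra((AW)^k)$'' claim also asserts existence; this is already implicit in your sufficiency step with $y=0$.
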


The equation \eqref{jed1''-W-mWC} can be considered as $GWAWx=Gb$ for $G=[(WA)^k]^*(WA)^{2m}[(WA)^m]^\dag$.

\begin{corollary} Let $A \in \Cnn$, $\ind(A)=k$, $m\in \mathbb{N}$, and $b\in\C^n$. The general solution to the equation
\begin{equation}(A^k)^*A^{2m}(A^m)^\dag Ax=(A^k)^*A^{2m}(A^m)^\dag b\label{jed1'''-W-mWC}\end{equation} is given as
$$x=A^{\core_m} b+(I_n-A^{\core_m}A)y,$$ for arbitrary $y\in\C^p$. Furthermore, $A^{\core_m}b$ is the unique solution to {\rm(\ref{jed1'''-W-mWC})} in $\Ra(A^k)$.
\end{corollary}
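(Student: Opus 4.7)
The plan is to obtain this corollary as a direct specialization of Theorem \ref{te11''-W-mWC} to the square case, by setting $p=n$ and $W=I_n$. Under this substitution every weighted object collapses to its unweighted counterpart: $WA=AW=A$, $WAW=A$, $(WA)^k=(AW)^k=A^k$, $[(WA)^m]^\dag=(A^m)^\dag$, and by definition $A^{\core_m,I_n}=A^{\core_m}$ (as recorded in Remark \ref{remark 1} together with the analogous identity $A^{\weak_m,I_n}=A^{\weak_m}$). The common index parameter $k=\max\{\ind(AW),\ind(WA)\}$ reduces simply to $\ind(A)=k$.

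First, I would rewrite equation \eqref{jed1''-W-mWC} under the substitution $W=I_n$. The left-hand side $[(WA)^k]^*(WA)^{2m}[(WA)^m]^\dag WAWx$ becomes $(A^k)^*A^{2m}(A^m)^\dag Ax$, and the right-hand side becomes $(A^k)^*A^{2m}(A^m)^\dag b$, which is exactly \eqref{jed1'''-W-mWC}. Then I would invoke Theorem \ref{te11''-W-mWC}: its general solution $x=A^{\core_m,W}b+(I_p-A^{\core_m,W}WAW)y$ specializes to $x=A^{\core_m}b+(I_n-A^{\core_m}A)y$ for arbitrary $y\in\C^n$, which is the stated form. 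Finally, since $\Ra((AW)^k)$ becomes $\Ra(A^k)$, the uniqueness clause of Theorem \ref{te11''-W-mWC} transfers verbatim, giving that $A^{\core_m}b$ is the unique solution to \eqref{jed1'''-W-mWC} lying in $\Ra(A^k)$.

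There is no real obstacle here; the only thing to be careful about is to verify that each symbolic substitution is consistent (for instance, noting that the equality $WAW=A$ correctly absorbs into the coefficient of $x$, and that $[(WA)^m]^\dag$ is well-defined and equal to $(A^m)^\dag$). Because all the heavy machinery (the identity $A^{\core_m,W}=A(WA)^k[(WA)^{k+m+2}]^\dag(WA)^{2m}[(WA)^m]^\dag$ from Proposition \ref{proposition 1}(f), the range/null-space description of Theorem \ref{theorem more properties}(c), and the kernel containment from \cite[Lemma 2.2]{MoStKa} used in the uniqueness argument) has already been established in the weighted setting, the corollary requires no new computation beyond the substitution $W=I_n$.
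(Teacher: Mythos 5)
Your proposal is correct and matches the paper's (implicit) argument exactly: the corollary is obtained by specializing Theorem \ref{te11''-W-mWC} to $p=n$ and $W=I_n$, under which every weighted quantity collapses to its unweighted counterpart. (As a minor aside, you rightly write $y\in\C^n$ where the paper's statement retains a stray $\C^p$.)
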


Also, notice that \eqref{jed1''-W-mWC} has a form $GAx=Gb$ for $G=(A^k)^*A^{2m}(A^m)^\dag $.

In the case that unknown matrix acts on the left, we obtain solvability of the following equations.

\begin{theorem}\label{te12-W-mWC} Let $A \in \Cm$, $0\neq W\in \Cn$, $k=\max\{\ind(AW), \ind(WA)\}$, $m\in \mathbb{N}$, and $B\in\C^{p\times p}$. The general solution to the equation
\begin{equation}XW(AW)^{k+1}=B(AW)^k\label{jed5-W-mWC}\end{equation} is given as
\begin{equation}X=BA^{\core_m,W}+Y(I_n-WAW A^{\core_m,W}),\label{jed6-W-mWC}\end{equation} for arbitrary $Y\in\C^{p\times p}$.
\end{theorem}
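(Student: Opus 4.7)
The proof will proceed in two standard directions: first verifying that every matrix of the prescribed form satisfies \eqref{jed5-W-mWC}, then showing that every solution admits such a decomposition.

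For the forward direction, given $X = BA^{\core_m,W} + Y(I_n - WAWA^{\core_m,W})$, I would substitute directly into the left-hand side and distribute. The key ingredient is Theorem \ref{theorem more properties}(b), which states $A^{\core_m,W}W(AW)^{k+1} = (AW)^k$. Using this, the first term contributes $BA^{\core_m,W}W(AW)^{k+1} = B(AW)^k$. For the second term, the same identity gives $YWAWA^{\core_m,W}W(AW)^{k+1} = YWAW(AW)^k = YW(AW)^{k+1}$, which precisely cancels $YW(AW)^{k+1}$. So the two contributions to the second term vanish, leaving exactly $B(AW)^k$.

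For the converse, let $X$ be an arbitrary solution of \eqref{jed5-W-mWC}. I propose to write the tautological decomposition
\[
X = XWAWA^{\core_m,W} + X(I_n - WAWA^{\core_m,W}),
\]
and then show that $XWAWA^{\core_m,W} = BA^{\core_m,W}$, which identifies $X$ as an element of the prescribed form with $Y = X$. The computation of $XWAWA^{\core_m,W}$ is the main obstacle, and the trick is to use the representation in Proposition \ref{proposition 1}(h), namely $A^{\core_m,W} = (AW)^k A[(WA)^{k+m+2}]^\dag (WA)^m P_{(WA)^m}$. Plugging this in, I get
\[
XWAWA^{\core_m,W} = XW(AW)^{k+1} A[(WA)^{k+m+2}]^\dag (WA)^m P_{(WA)^m}.
\]
Now I can apply the hypothesis $XW(AW)^{k+1} = B(AW)^k$ to replace the leading factor, obtaining $B(AW)^k A[(WA)^{k+m+2}]^\dag (WA)^m P_{(WA)^m}$, which by Proposition \ref{proposition 1}(h) is exactly $BA^{\core_m,W}$.

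The hard part is choosing a convenient representation of $A^{\core_m,W}$ so that the hypothesis on $X$ can be inserted; Proposition \ref{proposition 1}(h) is tailored to this because it begins with the factor $(AW)^k$, which combines with $WAW$ on the left to produce the block $W(AW)^{k+1}$ appearing in the equation. All remaining verifications are elementary manipulations already used repeatedly in the paper.
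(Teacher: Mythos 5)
Your proof is correct and takes essentially the same route as the paper's: the verification step rests on Theorem \ref{theorem more properties}(b), and the converse uses the tautological decomposition $X=XWAWA^{\core_m,W}+X(I_n-WAWA^{\core_m,W})$ together with the representation of $A^{\core_m,W}$ beginning with the factor $(AW)^k$ (Proposition \ref{proposition 1}(h), equivalently (f) via $(WA)^mP_{(WA)^m}=(WA)^{2m}[(WA)^m]^\dag$) so that the hypothesis $XW(AW)^{k+1}=B(AW)^k$ can be inserted. The paper merely runs the converse computation in the opposite direction, starting from $BA^{\core_m,W}$ and arriving at $XWAWA^{\core_m,W}$; the identities used are identical.
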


\begin{proof} Theorem \ref{theorem more properties} gives $A^{\core_m,W} W (AW)^{k+1}=(AW)^k$. For $X$ of the form (\ref{jed6-W-mWC}), we observe that
$$XW(AW)^{k+1}=BA^{\core_m,W}W(AW)^{k+1}+Y(I_n-WAW A^{\core_m,W})W(AW)^{k+1}=B(AW)^k,$$
that is, $X$ is a solution to (\ref{jed5-W-mWC}).

If (\ref{jed5-W-mWC}) has a solution $X$, then
\begin{eqnarray*}
BA^{\core_m,W}&=&B(AW)^kA[(WA)^{k+m+2}]^\dag (WA)^{2m}[(WA)^m]^\dag\\
&=&XW(AW)^{k+1}A[(WA)^{k+m+2}]^\dag (WA)^{2m}[(WA)^m]^\dag\\
&=&X(WA)^{k+2}[(WA)^{k+m+2}]^\dag (WA)^{2m}[(WA)^m]^\dag\\
&=&XWAWA^{\core_m,W}.
\end{eqnarray*}
Thus, $X=BA^{\core_m,W}+X(I_n-WAW A^{\core_m,W})$ has a form (\ref{jed6-W-mWC}).
\end{proof}

Theorem \ref{te12-W-mWC} gives the next result for the $m$-weak core inverse.

\begin{corollary} Let $A \in \Cnn$, $\ind(A)=k$, $m\in \mathbb{N}$, and $B\in\Cnn$. The general solution to the equation
\begin{equation}XA^{k+1}=BA^k\label{jed5'-W-mWC}\end{equation} is given as
$$X=BA^{\core_m}+Y(I_n-AA^{\core_m}),$$ for arbitrary $Y\in\Cnn$.
\end{corollary}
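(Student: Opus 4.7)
The plan is to derive this corollary as an immediate specialization of Theorem~\ref{te12-W-mWC} with $p=n$ and $W=I_n$, which is the uniform pattern used throughout the paper to pass from rectangular weighted statements to the square unweighted case. Under $W=I_n$, one has $AW=WA=A$, so $\max\{\ind(AW),\ind(WA)\}$ coincides with $\ind(A)=k$, and $A^{\core_m,W}$ becomes $A^{\core_m}$. The equation $XW(AW)^{k+1}=B(AW)^k$ collapses to $XA^{k+1}=BA^k$, the operator $WAWA^{\core_m,W}$ reduces to $AA^{\core_m}$, and the general solution (\ref{jed6-W-mWC}) takes exactly the form $X=BA^{\core_m}+Y(I_n-AA^{\core_m})$.

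For transparency I would also sketch a self-contained direct verification in two short steps. For sufficiency, I would substitute $X=BA^{\core_m}+Y(I_n-AA^{\core_m})$ into $XA^{k+1}$ and exploit the identity
\[
A^{\core_m} A^{k+1}=A^k,
\]
which is the $p=n$, $W=I_n$ case of Theorem~\ref{theorem more properties}(b). This immediately yields $BA^{\core_m}A^{k+1}=BA^k$ and $(I_n-AA^{\core_m})A^{k+1}=A^{k+1}-A\cdot A^k=0$, so $XA^{k+1}=BA^k$ as required.

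For necessity, given any solution $X$ of $XA^{k+1}=BA^k$, I would use the square specialization of Proposition~\ref{proposition 1}(f),
\[
A^{\core_m}=A^k[A^{k+m+2}]^\dag A^{2m}[A^m]^\dag,
\]
to rewrite
\[
BA^{\core_m}=(BA^k)[A^{k+m+2}]^\dag A^{2m}[A^m]^\dag
=(XA^{k+1})[A^{k+m+2}]^\dag A^{2m}[A^m]^\dag
=XAA^{\core_m}.
\]
Then the trivial decomposition $X=XAA^{\core_m}+X(I_n-AA^{\core_m})=BA^{\core_m}+Y(I_n-AA^{\core_m})$ with $Y:=X$ displays $X$ in the claimed form.

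I do not anticipate any real obstacle: both the specialization argument and the direct verification rest only on the identity $A^{\core_m}A^{k+1}=A^k$ and on a formula for $A^{\core_m}$ that has already been established earlier in the paper. The only care needed is bookkeeping to confirm that each ingredient of Theorem~\ref{te12-W-mWC} specializes correctly when $W=I_n$, which is routine.
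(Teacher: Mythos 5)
Your main argument---specializing Theorem~\ref{te12-W-mWC} to $p=n$, $W=I_n$---is exactly how the paper obtains this corollary, and it is correct. One small slip in your optional direct verification: the square case of Proposition~\ref{proposition 1}(f) is $A^{\core_m}=A^{k+1}[A^{k+m+2}]^\dag A^{2m}[A^m]^\dag$ (the leading factor $A$ must be kept, so the prefactor is $A^{k+1}$, not $A^k$), and accordingly the chain should read $BA^{\core_m}=(BA^k)\,A[A^{k+m+2}]^\dag A^{2m}[A^m]^\dag=(XA^{k+1})\,A[A^{k+m+2}]^\dag A^{2m}[A^m]^\dag=XAA^{\core_m}$; with this correction the argument goes through unchanged.
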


In order to confirm Theorem \ref{te12-W-mWC}, we give the following example.

\begin{example} Let $A$ and $W$ be given as in Example \ref{ex1-W-mWC},
$$B=\left[\begin{array}{cccc}
2 &  1 & 0 & 0\\
1 &  1 & 0 & 0\\
0 &  0 & 0 & 0\\
0 &  0 & 0 & 0
\end{array}\right] \quad \text{and}\quad 
Y=\left[\begin{array}{ccc}
y_1 & y_2 & y_3 \\
u_1 & u_2 & u_3 \\
v_1 & v_2 & v_3 \\
s_1 & s_2 & s_3 
\end{array}\right],$$ for arbitrary $y_1, y_2, y_3, u_1, u_2, u_3, v_1, v_2, v_3, s_1, s_2, s_3\in\C$.
Since
\begin{eqnarray*}
X&=&BA^{\core_m,W}+Y(I_n-WAW A^{\core_m,W})=\left[\begin{array}{ccc}
2 & y_2 & 0 \\
1 & u_2 & 0 \\
0 & v_2 & 0 \\
0 & s_2 & 0
\end{array}\right],
\end{eqnarray*}
it follows
$$XW(AW)^{k+1}=\left[\begin{array}{cccc}
2 &  0 & 4 & 4\\
1 &  0 & 2 & 2\\
0 &  0 & 0 & 0\\
0 &  0 & 0 & 0
\end{array}\right]=B(AW)^k.$$
\end{example}

\section*{Declarations}

\noindent {\bf  Funding}

\noindent This work was supported by Universidad Nacional de R\'{\i}o Cuarto (Grant PPI 18/C559),  CONICET (Grant PIBAA 28720210100658CO), and by Universidad Nacional de La Pampa, Facultad de Ingenier\'ia (Grant Resol. Nro. 135/19).
The second author is supported by the Ministry of Education, Science and Technological Development,
Republic of Serbia, grant number 451-03-65/2024-03/200124.

\noindent {\bf Authors' contributions}

\noindent The two authors contributed equally to this work.

\end{document}